\renewcommand{\theequation}{\arabic{equation}}
\newcommand{\D}{\displaystyle}
\newcommand{\p}{\partial}
\newcommand{\bxi}{\mbox{\boldmath$\xi$}}
\newcommand{\bx}{\mathbf{x}}
\newcommand{\bu}{\mathbf{u}}
\newcommand{\bn}{\mbox{\boldmath$\xi$}^\bot}
\newcommand{\myref}[1]{(\ref{#1})}
\newcommand{\gt}{\nabla ^\bot \theta}
\newenvironment{proof}{\noindent\textbf{Proof\ }}{\hspace*{\fill}$\Box$\medskip}
\newtheorem{theorem}{\textbf{Theorem}}[section]
\newtheorem{lemma}{\textbf{Lemma}}[section]
\newtheorem{remark}{\textbf{Remark}}[section]
\newtheorem{corollary}{\textbf{Corollary}}[section]
\begin{document}
\title{Dynamic Growth Estimates of Maximum Vorticity for 3D 
Incompressible Euler Equations and the SQG Model}
\author{Thomas Y. Hou\thanks{Applied and Comput. Math, Caltech,
Pasadena, CA 91125. {\it Email: hou@acm.caltech.edu.}} \and
Zuoqiang Shi\thanks{Applied and Comput. Math, Caltech,
Pasadena, CA 91125. {\it Email: shi@acm.caltech.edu.}} }
\date{\today}
\maketitle

\begin{abstract}
By performing estimates on the integral of the absolute value of 
vorticity along a local vortex line segment, we establish a relatively
sharp dynamic growth estimate of maximum vorticity under some 
assumptions on the local geometric regularity of the vorticity vector.
Our analysis applies to both the 3D incompressible Euler equations 
and the surface quasi-geostrophic model (SQG).
As an application of our vorticity growth estimate, we apply our
result to the 3D Euler equation with the two anti-parallel vortex tubes
initial data considered by Hou-Li \cite{HL06}. Under some 
additional assumption on the vorticity field, which seems to be
consistent with the computational results of \cite{HL06}, we show
that the maximum vorticity can not grow faster than double exponential
in time. Our analysis extends the earlier results by 
Cordoba-Fefferman \cite{CF01,CF02} and Deng-Hou-Yu \cite{DHY05,DHY06}.\\
\\
\textbf{Keywords}
  3D Euler equations; SQG equation; Finite time blow-up; 
Growth rate of maximum vorticity; Geometric properties.\\ \\
\textbf{Mathematics Subject Classification} Primary 76B03; Secondary 35L60, 35M10
\end{abstract}

\section{Introduction}

One of the most challenging problems in mathematical fluid
dynamics is to understand whether a solution of the 3D incompressible
Euler equations can develop a finite time singularity from
smooth initial data with finite energy. A main difficulty
is due to the presence of the vortex stretching term, which
has a formal quadratic nonlinearity in vorticity.
This problem has attracted a lot of attention in the
mathematics community and many people have contributed to
its understanding, see the recent book by Majda and
Bertozzi \cite{MB02} for a review of this subject.

An important development in recent years is the work
by Constantin, Fefferman, and Majda who showed that the local
geometric regularity of vortex lines can lead to depletion of
 nonlinear vortex stretching \cite{CFM96}. Inspired by
the work of \cite{CFM96}, Deng, Hou, and Yu \cite{DHY05,DHY06}
obtained more localized non-blowup criteria by exploiting the 
geometric regularity of a vortex line segment whose arclength
may shrink to zero at the potential singularity time. To obtain 
these results, Deng-Hou-Yu \cite{DHY05,DHY06} used a Lagrangian 
approach and explored the connection between the local geometric 
regularity of vortex lines and the growth of vorticity. Guided 
by this local geometric non-blowup analysis, Hou and Li
\cite{HL06,HL08} performed large scale computations with resolution
up to $1536\times 1024\times 3072$ to re-examine some of
the most well-known blow-up scenarios, including the two slightly
perturbed anti-parallel vortex tubes that was originally investigated
by Kerr \cite{Kerr93}. The computations of Hou and Li \cite{HL06}
provide strong numerical evidence that the geometric regularity
of vortex lines, even in an extremely localized region near
the support of maximum vorticity, can lead to depletion of
vortex stretching. We refer to a recent survey paper 
\cite{Hou09} for more discussions on this topic.

In this paper, we derive new growth rate estimates of
maximum vorticity for the 3D incompressible Euler equations.
We use a framework similar to that adopted by 
Deng-Hou-Yu \cite{DHY05}. The main innovation of this work 
is to introduce a method of analysis to study the dynamic
evolution of the integral of the absolute value of vorticity 
along a local vortex line segment. Specifically, we
derive a dynamic estimate for the quantity:
\begin{eqnarray}
Q(t)=\frac{1}{L(t)}\int_{0}^{L(t)}|\omega(\bx(s,t),t)| ds ,
\end{eqnarray}
where $\bx(s,t)$ is a parameterization of a vortex line segment,
$L^t$, and $L(t)$ is the arclength of $L^t$. The assumption
on $\bx(s,t)$ is less restrictive than that in \cite{DHY05}.
As in \cite{DHY05}, we assume that the vorticity along
$L^t$ is comparable to the maximum vorticity, i.e.
$\max_{L^t} |\omega| \ge c_0 \|\omega\|_{L^\infty}$.
Let $V(t)=\max_{\bx\in L^t}|(\bu\cdot \bxi)(\bx,t)|$,
and $U(t)=\max_{\bx\in L^t}|(\bu\cdot \bn)(\bx,t)|$.
Here $\bxi$ be the unit vorticity vector of $L^t$, and
$\bn$ the unit normal vector. Under the assumption that 
$\int_{L^t} |\bxi \cdot \nabla \bxi | ds \leq C_0$
and $\int_{L^t} |\nabla \cdot \bxi | ds \leq C_0$, 
we derive a relatively sharp growth estimate for $Q(t)$,
which can be used to obtain an upper bound on the
growth rate of the maximum vorticity: 
\begin{eqnarray}
\|\omega(t)\|_{L^\infty}
\le \frac{Q(T_0)}{c_0}\exp\left(C_0+\int_{T_0}^{t}
\frac{C_VV(t')+C_UU(t')}{L(t')}dt'\right),
\label{Vorticity_growth}
\end{eqnarray}
where $C_U$ and $C_V$ depend on $C_0$.
If we further assume that $L(t)$ has a positive lower bound, 
the above estimate implies no blow-up up to $t=T$, if 
$\int_0^T \|\mathbf{u}\|_{\infty} dt < \infty$.
This generalizes the result of Cordoba and Fefferman \cite{CF01}.

The above estimate extends the result of Deng-Hou-Yu in \cite{DHY05}.
In fact, it is easy to check that under the assumption that
$U(t)+V(t) \le C_u (T-t)^{-A}$ and $L(t) \ge C_L (T-t)^B$ with
$A+B < 1$, the right hand side of \myref{Vorticity_growth} remains 
bounded up to the time $t=T$, implying no blow-up up to $t=T$.
Our result can be also applied to the critical case when $A+B = 1$, 
which was considered in \cite{DHY06}. 
In this case, we have
  \begin{eqnarray}
\D\frac{C_VV(t)+C_UU(t)}{L(t)}\sim \frac{1}{T-t}.
  \end{eqnarray}
If we further assume that there exists $C_w <1$ such that
\begin{eqnarray}
\D\frac{C_VV(t)+C_UU(t)}{L(t)}\le \frac{C_w}{T-t},
  \end{eqnarray}
where $C_w$ depends on $C_0$, and the scaling constants in
$U(t)$, $V(t)$ and $L(t)$, then our growth estimate implies that 
\begin{equation}
\|\omega(t) \|_{L^\infty} \le  \frac{C}{(T-t)^{C_w}}.
\label{Vorticity_critical}
\end{equation}
Application of the Beale-Kato-Majda non-blow-up criterion
\cite{BKM84} would exclude blow-up at $t=T$ since $C_w < 1$ 
implies $\int_0^T \|\omega (t)\|_{L^\infty} dt < \infty$.

Of particular interest is the case when the vorticity has a
local Clebsch representation. In this case, the vorticity can be
represented by the two Clebsch variables $\phi$ and $\psi$
near the support of maximum vorticity as follows:
\begin{equation}
\omega = \nabla \phi \times \nabla \psi,
\end{equation}
where $\phi$ and $\psi$ are carried by the flow, that is
\begin{eqnarray}
&&\phi_t + {\bf u} \cdot \nabla \phi = 0, \\
&&\psi_t + {\bf u} \cdot \nabla \psi = 0, 
\end{eqnarray}
where ${\bf u}$ is the velocity field. In addition to the
geometric regularity assumption on $L^t$,
if we further assume that one of the Clebsch
variables has a bounded gradient and $L(t) \ge L_0 >0$,
then we prove that
the maximum vorticity can not grow faster than double
exponential in time, i.e. 
$\|\omega (t) \|_{L^\infty} \le C \exp(\exp(c_0t))$.
 
As an application of this result, we re-examine the computations 
of the 3D incompressible Euler equations with the
two slightly perturbed anti-parallel vortex tubes initial data
by Hou and Li \cite{HL06}. By examining the vorticity field 
carefully near the support of maximum vorticity (see Fig. 1), 
the vorticity field seems to have a local Clebsch representation. 
One of the Clebsch variables may be chosen along the vortex 
tube direction, which appears to be regular.  Moreover, 
the vortex lines within the support of maximum vorticity seem
to be quite smooth and has length of order one, implying that 
$L(t)$ has a positive lower bound. Thus the result that we described above 
may apply. One of the important findings of the Hou-Li computations 
is that the maximum vorticity does not grow faster than double 
exponential in time. Our new estimate on the vorticity growth 
may offer a theoretical explanation to the mechanism that
leads to this dynamic depletion of vortex stretching.

 \begin{figure}
    \begin{center}
      \includegraphics[width=8cm]{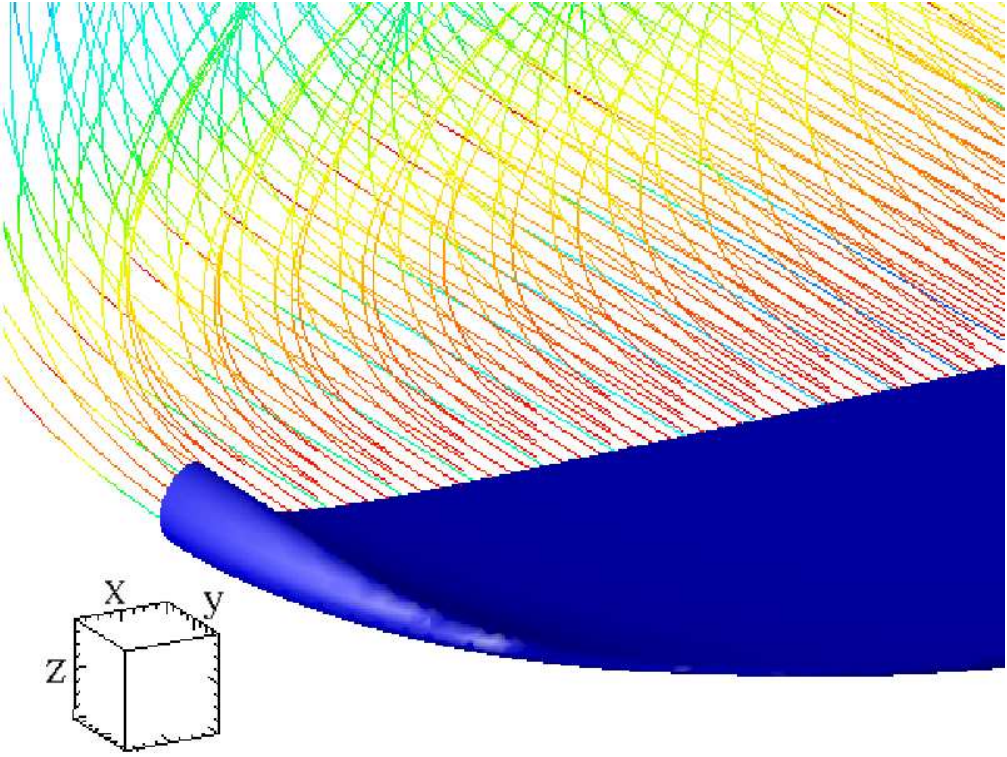}
    \end{center}
    \caption{The local 3D vortex structures and vortex lines around the
      maximum vorticity at $t=17$. Computation from Hou and Li
\cite{HL06} for the 3D incompressible Euler equations with
two slightly perturbed anti-parallel vortex tubes initial data.}
\end{figure}

We also apply our method of analysis to the surface 
quasi-geostrophic model (SQG) \cite{CMT94}. As pointed out in
\cite{CMT94}, a formal analogy between the SQG model and the 3D Euler 
equations can be established by considering $\nabla ^\bot \theta$ 
as the corresponding vorticity in the 3D Euler equations. Here 
$\theta$ is a scalar quantity that is transported by the flow:
\begin{equation}
\theta_t + {\bf u} \cdot \nabla \theta = 0,
\quad {\bf u} = \nabla^\bot (-\Delta )^{-1/2} \theta. 
\end{equation}
Let $L^t$ be a level set segment of $\theta$ along which 
$|\gt|$ is comparable to $\|\gt\|_{L^\infty}$
 and denote by $\bxi$ the unit tangent vector of $L^t$.
Under the assumption that
$\int_{L^t} |\bxi \cdot \nabla \bxi | ds \leq C_0$
and $\int_{L^t} |\nabla \cdot \bxi | ds \leq C_0$, 
we obtain a much better growth estimate for
 $\|\nabla^\bot \theta\|_{L^\infty}$:
\begin{eqnarray}
   \|\nabla^\bot \theta (t)\|_{L^\infty}
\le C \exp\left(c_1\exp\left(\int_{T_0}^t
\frac{c_2}{L(t')}dt'\right)\right).
\end{eqnarray}
In particular, if $L(t) \ge L_0 >0$, the above estimate
implies that 
$\|\nabla^\bot\theta\|_{L^\infty} \le C \exp(\exp(c_0 t))$.
This seems to be consistent with the numerical results 
obtained in \cite{OY97,CNS98}, see also \cite{CF02,DHLY05}.

The rest of the paper is organized as follows. In Section 2, we 
derive our estimate on the integral of vorticity over a vortex 
line segment for the 3D Euler equations, and apply this estimate 
to obtain an upper bound for the dynamic growth rate of maximum 
vorticity.  In Section 3, we generalize our analysis to the SQG 
model. In the Appendix, we prove a technical result  for the 3D 
Euler equations which states that the maximum velocity is 
bounded by $C \log (\|\omega(t)\|_{L^\infty})$ when the vorticity
field has a local Clebsch representation and one of the Clebsch
variables has a bounded gradient.

\section{Vorticity growth estimate for the 3D Euler equations}

In this section, we derive a new dynamic growth estimate of 
the maximum vorticity for the 3D incompressible Euler equations.
We adopt a framework similar to that used in \cite{DHY05}.
Let $\Omega(t) = \|\omega (t) \|_{L^\infty}$. We consider, 
at time $t$, a vortex line segmant $L^t$ along which the 
maximum of $|\omega| $ (denoted by $\Omega_L(t)$ in the
following) is comparable to $\Omega(t)$.  We use 
$\bx(s,t), \; 0\le s\le L(t)$ to parameterize $L^t$ with 
$s$ being the arclength variable. In our paper, we do not 
assume that $L^t$ is a subset of $\mathbf{X}(L^{t'},t',t)$, 
the flow image of $L^{t'}$ at time $t$, for $t'<t$. This 
assumption was required in the analysis of \cite{DHY05}.
Further, we denote by $L(t)$ the arclength of $L^t$. 
The unit tangential and normal vectors are defined as
follows:$\bxi=\frac{\nabla ^\bot \theta}{|\nabla ^\bot \theta|},\;
\bn=\frac{\bxi\cdot \nabla \bxi}{|\bxi\cdot \nabla \bxi|}$,
the unsigned curvature is defined as
$\kappa=|\bxi\cdot \nabla \bxi|$, and 
$\tau=\nabla \cdot \bxi$. Finally, 
we denote $V(t)=\max_{\bx\in L^t}|(\bu\cdot \bxi)(\bx,t)|$, 
and $U(t)=\max_{\bx\in L^t}|(\bu\cdot \bn)(\bx,t)|$. 

\begin{lemma}
\label{lemma euler}
  Let  $L^t=\left\{\bx(s,t), 0\le s\le L(t)\right\}$ be a family of 
vortex line segments which come from the same vortex line. Define 
$Q(t)$ as the mean of $|\omega(\bx,t)|$ over $L^t$,
\begin{eqnarray}
Q(t)=\frac{1}{L(t)}\int_{0}^{L(t)}|\omega(\bx(s,t),t)| ds .
\end{eqnarray}
Then, we have 
\begin{eqnarray}
\frac{dQ(t)}{dt}&=&\frac{1}{L}\left(\int_{0}^{L(t)}2\tau|\omega|(\bu\cdot\bxi)
 ds-\int_{0}^{L(t)}\kappa|\omega|(\bu\cdot\bn) ds\right.\nonumber\\
&&\left.-\int_{0}^{L(t)}\kappa\left(|\omega|(\bx(s,t),t)-|\omega|(\bx(L(t),t),t)\right)
(\bu\cdot\bn) ds\right)\nonumber\\
&&+\frac{1}{L}\left[\left(\bu\cdot\bxi\right)(\bx(L,t),t)-\left(\bu\cdot\bxi\right)(\bx(0,t),t)\right]|\omega|(\bx(L,t),t)\nonumber\\
&&+\frac{1}{L}\left[|\omega(\bx(L,t),t)|-|\omega(\bx(0,t),t)|\right]\left(\frac{d \bx}{dt}\cdot \bxi+\bu\cdot \bxi\right)(\bx(0,t),t)\nonumber\\
&&+\frac{L_t}{L}\left(|\omega(\bx(L,t),t)|-Q\right). \quad\quad
\end{eqnarray}
\end{lemma}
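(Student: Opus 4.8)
The plan is to differentiate $Q(t)=\frac{1}{L(t)}\int_0^{L(t)}|\omega(\bx(s,t),t)|\,ds$ directly, treating it as an integral with a time-dependent upper limit and a time-dependent integrand. By the Leibniz rule,
\[
\frac{dQ}{dt}=-\frac{L_t}{L^2}\int_0^{L}|\omega|\,ds+\frac{1}{L}\frac{d}{dt}\int_0^{L(t)}|\omega(\bx(s,t),t)|\,ds,
\]
and the first term is $-\frac{L_t}{L}Q$, which will combine with boundary contributions to produce the final $\frac{L_t}{L}(|\omega(\bx(L,t),t)|-Q)$ term. The substantive work is computing $\frac{d}{dt}\int_0^{L(t)}|\omega|\,ds$, which splits into the boundary term $L_t\,|\omega(\bx(L,t),t)|$ coming from the moving endpoint, plus $\int_0^L \frac{d}{dt}|\omega(\bx(s,t),t)|\,ds$ from differentiating under the integral.

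The key step will be to express the material-type derivative $\frac{d}{dt}|\omega(\bx(s,t),t)|=\p_t|\omega|+\big(\frac{d\bx}{dt}\cdot\nabla\big)|\omega|$ and to rewrite the spatial transport along the curve in terms of the arclength derivative. Since $\bxi$ is the unit tangent, $\p_s\bx=\bxi$, so $\p_s|\omega(\bx(s,t),t)|=(\bxi\cdot\nabla)|\omega|$; this lets me convert $(\bu\cdot\nabla)|\omega|$ into a tangential $\p_s$ piece plus genuinely off-tangent pieces. The plan is to decompose the velocity into tangential and normal parts, $\bu=(\bu\cdot\bxi)\bxi+(\bu\cdot\bn)\bn+\dots$, and to use the vorticity evolution $\frac{D|\omega|}{Dt}=(\bxi\cdot\nabla\bu\cdot\bxi)|\omega|$ for the 3D Euler equations together with the identities $\kappa=|\bxi\cdot\nabla\bxi|$, $\bn=\frac{\bxi\cdot\nabla\bxi}{\kappa}$, and $\tau=\nabla\cdot\bxi$. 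Integration by parts in $s$ on the tangential transport term $\int_0^L(\bu\cdot\bxi)\,\p_s|\omega|\,ds$ will move the $\p_s$ off $|\omega|$ and generate the two endpoint brackets in the statement together with $-\int_0^L|\omega|\,\p_s(\bu\cdot\bxi)\,ds$; expanding $\p_s(\bu\cdot\bxi)=\bxi\cdot\nabla(\bu\cdot\bxi)$ and reorganizing using $\tau$ and $\kappa$ is what produces the $\int 2\tau|\omega|(\bu\cdot\bxi)$ and the $\kappa$-weighted normal-velocity integrals.

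I expect the main obstacle to be the careful bookkeeping that turns the raw stretching term $(\bxi\cdot\nabla\bu\cdot\bxi)|\omega|$ and the integration-by-parts remainder into exactly the stated combination of $\tau$ and $\kappa$ integrals, in particular the appearance of the coefficient $2$ on $\tau$ and the subtraction of the reference value $|\omega|(\bx(L,t),t)$ inside the third integral. The subtracted-reference structure strongly suggests that one first writes the $\kappa(\bu\cdot\bn)|\omega|$ contribution with $|\omega|$ replaced by $|\omega|(\bx(L,t),t)$ plus the difference $\big(|\omega|(\bx(s,t),t)-|\omega|(\bx(L,t),t)\big)$, so that the constant-reference part can be pulled out and recombined with the endpoint terms. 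The other delicate point is handling the nonsmoothness of $s\mapsto|\omega|$ through the absolute value and the factor $\frac{d\bx}{dt}\cdot\bxi+\bu\cdot\bxi$ evaluated at $s=0$: I would argue that on $L^t$ where $|\omega|$ is comparable to the maximum it does not vanish, so $|\omega|$ is smooth there and $\frac{d}{dt}|\omega|=\operatorname{sgn}(\omega)\cdot\frac{D\omega}{Dt}$ composed with the parameterization is well defined, allowing all the differentiations above to be justified. Assembling these pieces and matching each endpoint evaluation to the corresponding line of the claimed formula completes the proof.
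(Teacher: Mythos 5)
Your overall strategy (differentiate $Q$ directly in the $(s,t)$ parameterization via the Leibniz rule, then integrate by parts in $s$) can be made to work, but as written it has a genuine gap at exactly the point you yourself flag as ``the main obstacle,'' and that obstacle is not mere bookkeeping. After substituting the evolution equation $\p_t|\omega|+\bu\cdot\nabla|\omega|=\alpha|\omega|$, with $\alpha=(\bxi\cdot\nabla)\bu\cdot\bxi$, your chain rule gives
\[
\frac{d}{dt}\bigl|\omega(\bx(s,t),t)\bigr|
=\alpha|\omega|+\Bigl(\bigl(\frac{d\bx}{dt}-\bu\bigr)\cdot\nabla\Bigr)|\omega|,
\]
and everything hinges on the vector $\frac{d\bx}{dt}-\bu$, about which you say nothing. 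You need two facts that you neither state nor prove: (i) $\frac{d\bx}{dt}-\bu$ is \emph{tangential} to $L^t$, which is the materiality of vortex lines under the Euler flow; without it, your ``genuinely off-tangent pieces'' contain normal derivatives of $|\omega|$ off the curve, which are not determined by data along $L^t$ and do not cancel, so the computation cannot close to a formula involving only on-curve quantities; and (ii) a law for the tangential sliding $\lambda(s,t)=\bigl(\frac{d\bx}{dt}-\bu\bigr)\cdot\bxi$ as a function of $s$, namely $\p_s\lambda=-\alpha$, obtained by differentiating the constraint $|\bx_s|\equiv 1$ in time (using (i)). It is (ii), fed into the integration by parts
\[
\int_0^{L}\lambda\,\p_s|\omega|\,ds=\bigl[\lambda|\omega|\bigr]_0^{L}+\int_0^{L}\alpha|\omega|\,ds ,
\]
that produces the doubling of the stretching term, and, via $\lambda(L)=\lambda(0)-\int_0^L\alpha\,ds$, the term $|\omega|(\bx(L,t),t)\bigl(L_t-\int_0^L\alpha\,ds\bigr)$ whose reorganization yields both the subtracted-reference integral and the endpoint bracket with the $+$ sign in front of $\bu\cdot\bxi$. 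Your proposal gestures at these outcomes (``careful bookkeeping,'' ``strongly suggests'') but supplies no mechanism. A smaller omission: converting the integration-by-parts remainder into $\int 2\tau|\omega|(\bu\cdot\bxi)\,ds$ requires $\p_s|\omega|=-\tau|\omega|$, i.e.\ $\nabla\cdot\omega=0$ written along the line, which you also never invoke.

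For comparison, the paper sidesteps the sliding-velocity analysis entirely by changing variables to the arclength $\beta$ of the same vortex line at the reference time $T_0$ (a Lagrangian label) and using the Deng--Hou--Yu stretching identity $s_\beta=|\omega(\bx(\beta,t),t)|/|\omega(\bx(\beta,T_0),T_0)|$. Then $\int_0^{L}|\omega|\,ds=\int_{\beta_1}^{\beta_2}|\omega|^2/|\omega(\cdot,T_0)|\,d\beta$, so the factor $2$ appears automatically from differentiating $|\omega|^2$ in time; the moving endpoints enter through $\beta_{1t},\beta_{2t}$, with $\beta_{2t}$ eliminated by computing $L_t$ and $\beta_{1t}$ expressed through $\bigl(\frac{d\bx}{dt}\cdot\bxi-\bu\cdot\bxi\bigr)(\bx(0,t),t)$. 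That identity, or equivalently the pair (i)--(ii) above, is the indispensable dynamical input connecting vortex stretching to arclength; once you add it, your route becomes a legitimate and arguably more elementary (infinitesimal rather than integrated) alternative to the paper's proof, but without it the proposal does not constitute a proof.
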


\begin{proof}
  Differentiating $Q(t)$ with respect to $t$ yields
\begin{eqnarray}
\label{dQw}
\frac{dQ(t)}{dt}&=&\frac{1}{L(t)}\frac{d}{dt}\left(\int_{0}^{L(t)}|\omega(\bx(s,t),t)| ds\right)-\frac{QL_t}{L} .
\end{eqnarray}

Let $\beta$ be the arclength parameter of this vortex line at time $T_0$. Then we can write, for this specific vortex line,
$s=s(\beta,t)$. Let $\beta_1(t), \beta_2(t)$ be the corresponding 
coordinates of the end points of $L^t$, i.e.
\begin{eqnarray*}
s(\beta_1(t),t)=0,\quad s(\beta_2(t),t)=L(t).
\end{eqnarray*}

First, we can change the integral variable from $s$ to $\beta$ in 
\myref{dQw},
\begin{eqnarray}
\label{dQw-1}
\frac{d}{dt}\left(\int_{0}^{L(t)}|\omega(\bx(s,t),t)| ds\right)=
\frac{d}{dt}\left(\int_{\beta_1(t)}^{\beta_2(t)}|\omega(\bx(s(\beta,t),t),t)|
s_\beta d\beta\right) .
\end{eqnarray}
In \cite{DHY05}, Deng-Hou-Yu proved the following equality,
\begin{eqnarray}
\frac{d s}{d\beta}(\bx(\beta,t),t)=
\frac{|\omega(\bx(\beta,t),t)|}{|\omega(\bx(\beta,T_0),T_0)|} .
\end{eqnarray}
Substituting the above relation to \myref{dQw-1} yields
\begin{eqnarray}
&&\frac{d}{dt}\left(\int_{0}^{L(t)}|\omega(\bx(s,t),t)| ds\right)\nonumber\\
&=& \frac{d}{dt}\left(\int_{\beta_1(t)}^{\beta_2(t)}|\omega(\bx(\beta,t),t)|^2/|\omega(\bx(\beta,T_0),T_0)| d\beta\right)\nonumber\\
&=&\int_{\beta_1(t)}^{\beta_2(t)}\frac{2|\omega|}{|\omega(\bx(\beta,T_0),T_0)|}\frac{D}{Dt}|\omega|
 d\beta+\frac{|\omega(\bx(\beta_2,t),t)|^2}{|\omega(\bx(\beta_2,T_0),T_0)|}\beta_{2t}
-\frac{|\omega(\bx(\beta_1,t),t)|^2}{|\omega(\bx(\beta_1,T_0),T_0)|}\beta_{1t}\nonumber\\
&=&\int_{\beta_1(t)}^{\beta_2(t)}\frac{2|\omega|}{|\omega(\bx(\beta,T_0),T_0)|}\alpha|\omega|
 d\beta+\frac{|\omega(\bx(\beta_2,t),t)|^2}{|\omega(\bx(\beta_2,T_0),T_0)|}\beta_{2t}
-\frac{|\omega(\bx(\beta_1,t),t)|^2}{|\omega(\bx(\beta_1,T_0),T_0)|}\beta_{1t}\nonumber\\
&=&\int_{0}^{L(t)}2\alpha|\omega|
 ds+\frac{|\omega(\bx(\beta_2,t),t)|^2}{|\omega(\bx(\beta_2,T_0),T_0)|}\beta_{2t}
-\frac{|\omega(\bx(\beta_1,t),t)|^2}{|\omega(\bx(\beta_1,T_0),T_0)|}\beta_{1t} ,
\end{eqnarray}
where we have used
$\frac{D}{Dt}|\omega| = \alpha |\omega|$ with
$\alpha = (\bxi \cdot \nabla) \bu \cdot \bxi = (\bu\cdot\bxi)_s-\kappa \bu\cdot \bn$ \cite{DHY05}.

Note that the arclength $L(t)$ can be expressed as follows:
\begin{eqnarray}
L(t)=\int_{\beta_1(t)}^{\beta_2(t)}s_\beta d\beta=\int_{\beta_1(t)}^{\beta_2(t)}\frac{|\omega(\bx(\beta,t),t)|}{|\omega(\bx(\beta,T_0),T_0)|} d\beta .
\end{eqnarray}
Differentiating the both sides with respect to $t$, we get
\begin{eqnarray}
\label{dbeta2}
\frac{dL(t)}{dt}&=&\int_{\beta_1(t)}^{\beta_2(t)}\frac{D|\omega|/Dt}{|\omega(\bx(\beta,T_0),T_0)|} d\beta+
\frac{|\omega(\bx(\beta_2,t),t)|\beta_{2t}}{|\omega(\bx(\beta_2,T_0),T_0)|}-
\frac{|\omega(\bx(\beta_1,t),t)|\beta_{1t}}{|\omega(\bx(\beta_1,T_0),T_0)|}\nonumber\\
&=&\int_{\beta_1(t)}^{\beta_2(t)}\frac{\alpha |\omega|}{|\omega(\bx(\beta,T_0),T_0)|} d\beta+
\frac{|\omega(\bx(\beta_2,t),t)|\beta_{2t}}{|\omega(\bx(\beta_2,T_0),T_0)|}-
\frac{|\omega(\bx(\beta_1,t),t)|\beta_{1t}}{|\omega(\bx(\beta_1,T_0),T_0)|}\nonumber\\
&=&\int_{0}^{L(t)}\alpha ds+
\frac{|\omega(\bx(\beta_2,t),t)|\beta_{2t}}{|\omega(\bx(\beta_2,T_0),T_0)|}-
\frac{|\omega(\bx(\beta_1,t),t)|\beta_{1t}}{|\omega(\bx(\beta_1,T_0),T_0)|} .
\end{eqnarray}
Substituting the above relation to \myref{dQw-1}, we obtain
\begin{eqnarray}
\label{dQw-1-update}
&&\frac{d}{dt}\left(\int_{0}^{L(t)}|\omega(\bx(s,t),t)| ds\right)\nonumber\\
&=&\int_{0}^{L(t)}2\alpha|\omega|
 ds+|\omega(\bx(\beta_2,t),t)|\left(L_t-\int_{0}^{L(t)}\alpha ds\right)\nonumber\\
&&+\left(|\omega(\bx(\beta_2,t),t)|-|\omega(\bx(\beta_1,t),t)|\right)
\frac{|\omega(\bx(\beta_1,t),t)|}{|\omega(\bx(\beta_1,T_0),T_0)|}\beta_{1t} .
\end{eqnarray}
Observe that
\begin{eqnarray}
\frac{d \bx(0,t)}{dt}\cdot \bxi(\bx(0,t),t)
&=&\frac{d \bx(\beta_1(t),t)}{dt}\cdot \bxi(\bx(\beta_1(t),t),t)\nonumber\\
&=& \left(\frac{\p \bx(\beta_1,t)}{\p t}+\frac{\p \bx(\beta_1,t)}{\p \beta_1}\frac{d \beta_1}{d t}\right)
\cdot \bxi(\bx(\beta_1(t),t),t)\nonumber\\
&=&\left(\bu(\beta_1,t)+\frac{\omega(\bx(\beta_1,t),t)\beta_{1t}}{|\omega(\bx(\beta_1,T_0),T_0)|}\right)
\cdot \bxi(\bx(\beta_1(t),t),t)\nonumber\\
&=&\left(\bu\cdot \bxi\right)(\bx(0,t),t)+\frac{|\omega(\bx(\beta_1,t),t)|\beta_{1t}}
{|\omega(\bx(\beta_1,T_0),T_0)|} .
\end{eqnarray}
Substituting the above equality to \myref{dQw-1-update}, we get
\begin{eqnarray}
\label{dQw-1-update2}
&&\frac{d}{dt}\left(\int_{0}^{L(t)}|\omega(\bx(s,t),t)| ds\right)\nonumber\\
&=&\int_{0}^{L(t)}2\alpha|\omega|
 ds+|\omega(\bx(L,t),t)|\left(L_t-\int_{0}^{L(t)}\alpha ds
\right)\nonumber\\
&&+\left(|\omega(\bx(L,t),t)|-|\omega(\bx(0,t),t)|\right)\left(\frac{d \bx}{dt}\cdot \bxi-\bu\cdot \bxi
\right)(\bx(0,t),t) .
\end{eqnarray}
Now, we have
\begin{eqnarray}
\label{dQw-equal}
\frac{dQ(t)}{dt}&=&\frac{1}{L(t)}\frac{d}{dt}\left(\int_{0}^{L(t)}|\omega(\bx(s,t),t)| ds\right)-\frac{QL_t}{L}\nonumber\\
&=&\frac{1}{L(t)}\left(\int_{0}^{L(t)}2\alpha|\omega|
 ds-|\omega(\bx(\beta_2,t),t)|\int_{0}^{L(t)}\alpha ds\right)\nonumber\\
&&+\frac{1}{L(t)}\left(|\omega(\bx(\beta_2,t),t)|-|\omega(\bx(\beta_1,t),t)|\right)\left(\frac{d \bx}{dt}\cdot \bxi-\bu\cdot \bxi
\right)(\bx(0,t),t)\nonumber\\
&&+\frac{L_t}{L}\left(|\omega(\bx(\beta_2,t),t)|-Q\right).
\end{eqnarray}
Using $\alpha=(\bu\cdot\bxi)_s-\kappa \bu\cdot \bn$ and 
integrating by parts, we obtain
\begin{eqnarray}
&& \int_{0}^{L(t)}2\alpha|\omega| ds-|\omega(\bx(\beta_2,t),t)|\int_{0}^{L(t)}\alpha ds\nonumber\\
&=&\int_{0}^{L(t)}2|\omega|\left((\bu\cdot\bxi)_s-\kappa \bu\cdot \bn\right) ds
-|\omega(\bx(L,t),t)|\int_{0}^{L(t)}\left((\bu\cdot\bxi)_s-\kappa \bu\cdot \bn\right) ds\nonumber\\
&=&2\left(|\omega|\bu\cdot\bxi\right)\left|_{0}^{L(t)}\right.+\int_{0}^{L(t)}2\tau|\omega|(\bu\cdot\bxi) ds
-\int_{0}^{L(t)}2\kappa|\omega|\bu\cdot \bn ds\nonumber\\
&&-|\omega|(\bx(L,t),t)\left(\bu\cdot\bxi\right)\left|_{0}^{L(t)}\right.+
|\omega(\bx(L,t),t)|\int_{0}^{L(t)}\kappa \bu\cdot \bn ds .
\end{eqnarray}
Substitute the above equality to \myref{dQw-equal} gives
\begin{eqnarray}
\frac{dQ(t)}{dt}&=&\frac{1}{L}\left(\int_{0}^{L(t)}2\tau|\omega|(\bu\cdot\bxi)
 ds-\int_{0}^{L(t)}\kappa|\omega|(\bu\cdot\bn) ds\right.\nonumber\\
&&\left.-\int_{0}^{L(t)}\kappa\left(|\omega|(\bx(s,t),t)-|\omega|(\bx(L(t),t),t)\right)
(\bu\cdot\bn) ds\right)\nonumber\\
&&+\frac{1}{L}\left[\left(\bu\cdot\bxi\right)(\bx(L,t),t)-\left(\bu\cdot\bxi\right)(\bx(0,t),t)\right]|\omega|(\bx(L,t),t)\nonumber\\
&&+\frac{1}{L}\left[|\omega(\bx(L,t),t)|-|\omega(\bx(0,t),t)|\right]\left(\frac{d \bx}{dt}\cdot \bxi+\bu\cdot \bxi\right)(\bx(0,t),t)\nonumber\\
&&+\frac{L_t}{L}\left(|\omega(\bx(L,t),t)|-Q\right). \quad\quad
\end{eqnarray}
This completes the proof of Lemma \ref{lemma euler}. 
\end{proof}

Now we are ready to state the main result of this paper.

\begin{theorem}
\label{theorem 3d euler}
Assume there is a family of vortex line segments $L^t=\left\{\bx(s,t), 0\le s\le L(t)\right\}$ which come from the same
 vortex line and $T_0\in [0,T)$, such that $\Omega_L(t)\ge c_0\Omega(t)$ for some $0<c_0\le 1$ for all $t\in [T_0,T)$ and
$|\omega(\bx(L(t),t),t)|=\Omega_L(t)$. Further
we assume that there exist constants $C_0>0,\;C_l>0$, such that the
following condition is satisfied:
\begin{eqnarray}
\D\int_{L^t} |\kappa(\bx(s,t),t)|ds\le C_0,\quad 
\int_{L^t} |\tau(\bx(s,t),t)|ds \le C_0,
\quad \left|\frac{d \bx(0,t)}{dt}\cdot \bxi(\bx(0,t),t)\right|\le C_lV(t).
\end{eqnarray}
Then, the maximum vorticity $\Omega(t)$ satisfies the following growth 
estimate:
\begin{eqnarray}
\Omega(t)\le \frac{Q(T_0)}{c_0}\exp\left(C_0+\int_{T_0}^{t}\frac{C_VV(t')+C_UU(t')}{L(t')}dt'\right),
\end{eqnarray}
where $C_U=C_0(2C_1-1)$, $C_V=2C_0C_1+(C_1-1)(C_l+1)+2C_1$ and $C_1=\exp(C_0)$.
\end{theorem}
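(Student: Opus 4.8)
The plan is to convert Lemma \ref{lemma euler} into a single differential inequality for $\ln Q(t)$ and then pass from $Q$ to $\Omega$. Write $\Omega_L(t)=|\omega(\bx(L,t),t)|$ for the endpoint value, which by hypothesis equals $\max_{L^t}|\omega|$. The two facts I will use throughout are that every factor $|\omega|$ on the right-hand side of Lemma \ref{lemma euler} can be replaced by a fixed multiple of $Q$, and that the velocity factors $\bu\cdot\bxi$ and $\bu\cdot\bn$ are controlled by $V(t)$ and $U(t)$. Once I establish $\frac{d}{dt}\ln Q\le \frac{C_VV+C_UU}{L}$, integrating from $T_0$ to $t$ bounds $Q(t)$, and the conversion to $\Omega$ produces the additive $C_0$ in the exponent.

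The key new ingredient, and the reason $C_1=\exp(C_0)$ governs all the constants, is a pointwise comparison of $|\omega|$ along $L^t$. Since vorticity is divergence free, $0=\nabla\cdot(|\omega|\bxi)=\bxi\cdot\nabla|\omega|+|\omega|\,\nabla\cdot\bxi$, and because $s$ is arclength along a vortex line, $\p_s=\bxi\cdot\nabla$, so
\[
\p_s|\omega|=-\tau\,|\omega|,\qquad \p_s\ln|\omega|=-\tau .
\]
Integrating and using $\int_{L^t}|\tau|\,ds\le C_0$ gives $e^{-C_0}\le |\omega(\bx(s,t),t)|/|\omega(\bx(s',t),t)|\le e^{C_0}$ for all $s,s'$, hence $\Omega_L=\max_{L^t}|\omega|\le C_1\min_{L^t}|\omega|\le C_1 Q$. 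Combined with $\Omega_L\ge c_0\Omega$ this already yields $\Omega\le C_1 Q/c_0=\exp(C_0)\,Q/c_0$, which is exactly where the $\exp(C_0)$ in the statement comes from. The same inequality supplies the pointwise bounds $|\omega|\le C_1 Q$ and $\Omega_L-|\omega|\le (C_1-1)Q$ used inside the integrals.

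I then estimate the right-hand side of Lemma \ref{lemma euler} term by term. In the first integral, $|\omega|\le C_1 Q$, $|\bu\cdot\bxi|\le V$, and $\int|\tau|\,ds\le C_0$ give a contribution $\le 2C_0C_1\,VQ/L$. The two curvature integrals are handled the same way via $|\omega|\le C_1Q$, $\Omega_L-|\omega|\le (C_1-1)Q$, $\int|\kappa|\,ds\le C_0$, and $|\bu\cdot\bn|\le U$, and combine to the stated $C_U=C_0(2C_1-1)$. The boundary terms are bounded by $\Omega_L\le C_1 Q$ times $2V$ (difference of $\bu\cdot\bxi$ at the endpoints) and by $(\Omega_L-|\omega(\bx(0,t))|)\le(C_1-1)Q$ times $(C_l+1)V$ (using $|\frac{d\bx(0,t)}{dt}\cdot\bxi|\le C_lV$ and $|\bu\cdot\bxi|\le V$), accounting for the remaining pieces $2C_1$ and $(C_1-1)(C_l+1)$ of $C_V$.

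The step I expect to be the main obstacle is the final term $\frac{L_t}{L}(\Omega_L-Q)$, since $L_t$ is not among the controlled quantities and $\Omega_L-Q\ge 0$ makes it genuinely contribute. The way around it is to produce an identity for $L_t$ itself: differentiating the arclength, substituting $\alpha=(\bu\cdot\bxi)_s-\kappa\,\bu\cdot\bn$, and using the endpoint relation $\frac{d\bx}{dt}\cdot\bxi=\bu\cdot\bxi+\dots$ from the proof of the lemma, the interior $\bu\cdot\bxi$ contributions cancel and one is left with $L_t=-\int_{L^t}\kappa\,(\bu\cdot\bn)\,ds+[\frac{d\bx}{dt}\cdot\bxi]_0^L$, which the curvature bound and the endpoint hypothesis control. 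Together with $\Omega_L-Q\le(C_1-1)Q$ this forces the term to be of the same $\frac{C(U+V)}{L}Q$ type, so it does not enlarge $C_U,C_V$. Collecting everything gives $\frac{d}{dt}\ln Q\le\frac{C_VV+C_UU}{L}$; Gronwall's inequality from $T_0$ to $t$ bounds $Q(t)$ by $Q(T_0)\exp\!\big(\int_{T_0}^t\frac{C_VV+C_UU}{L}\big)$, and the comparison $\Omega\le\exp(C_0)\,Q/c_0$ from the second paragraph then finishes the proof.
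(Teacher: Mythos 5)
Your comparison estimate and your bounds on the first three terms coincide with the paper's proof: the pointwise relation $\partial_s|\omega|=-\tau|\omega|$ (which the paper quotes as Lemma 1 of \cite{DHY05} rather than re-deriving from $\nabla\cdot\omega=0$, but it is the same fact) gives $\max_{L^t}|\omega|\le C_1\min_{L^t}|\omega|\le C_1Q$, and your term-by-term estimates of $I_1$, $I_2$, $I_3$ reproduce the paper's constants exactly. The genuine gap is in your treatment of the last term $I_4=\frac{L_t}{L}\left(|\omega(\bx(L,t),t)|-Q\right)$. Your identity
\[
L_t=-\int_{0}^{L(t)}\kappa\,(\bu\cdot\bn)\,ds+\left[\frac{d\bx}{dt}\cdot\bxi\right]_{s=0}^{s=L(t)}
\]
is correct, but to use it you must bound the tangential velocity of \emph{both} endpoints, and the hypotheses control only the endpoint at $s=0$, via $\left|\frac{d\bx(0,t)}{dt}\cdot\bxi(\bx(0,t),t)\right|\le C_lV(t)$. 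The other endpoint $\bx(L(t),t)$ is pinned to wherever $|\omega|$ attains its maximum over $L^t$; since the segments $L^t$ are not assumed to be flow images of one another (this is precisely the relaxation of \cite{DHY05} that the paper makes), that point can slide along the vortex line with arbitrarily large tangential speed, and nothing in the hypotheses bounds $\frac{d\bx(L,t)}{dt}\cdot\bxi(\bx(L,t),t)$. Hence $L_t$ is not a controlled quantity and your estimate of $I_4$ does not go through. Moreover, even if one granted a bound of the form $C_lV$ at the second endpoint as well, you would obtain $I_4\le\frac{(C_1-1)Q}{L}\left(C_0U+2C_lV\right)$, which strictly enlarges both $C_U$ and $C_V$; your claim that this term ``does not enlarge $C_U,C_V$'' is therefore false, and the theorem with its stated constants would still not follow by this route.

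The paper disposes of $I_4$ by a sign argument rather than by estimating $|L_t|$: it assumes, without loss of generality, that $L(t)$ is monotonically decreasing, so $L_t\le 0$; since the endpoint value $|\omega(\bx(L,t),t)|=\Omega_L(t)$ is the maximum of $|\omega|$ over $L^t$, it dominates the average $Q$, and so $I_4\le 0$ and is simply discarded. To repair your argument you would either need to adopt that normalization of $L(t)$, or add a hypothesis controlling the motion of the endpoint $\bx(L(t),t)$ and accept correspondingly larger constants.
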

\begin{proof}
Without the loss of generality, we may assume that $L(t)$ is monotonically decreasing, i.e. $L_t\le 0$ and $L(T_0)$ 
is sufficiently small.

In Lemma 1 of \cite{DHY05}, Deng-Hou-Yu proved the following equality:
\begin{eqnarray}
  |\omega(\bx(s_2,t))|= |\omega(\bx(s_1,t))|\;e^{\int_{s_1}^{s_2}-\tau(\bx(s,t))ds}.
\end{eqnarray}
It follows from the assumption $\int_{L^t} |\tau(\bx,t)|ds \le C_0$ that 
\begin{eqnarray}
\label{max-Qw}
 \max_{\bx\in L^t} |\omega(\bx,t)|\le C_1 \min_{\bx\in L^t} |\omega(\bx,t)|\le C_1 Q,
\end{eqnarray}
where $C_1=\exp(C_0)$.

By Lemma \ref{lemma euler}, we have
\begin{eqnarray}
\label{dQw-update}
\frac{dQ(t)}{dt}&=&\frac{1}{L}\left(\int_{0}^{L(t)}2\tau|\omega|(\bu\cdot\bxi)
 ds-\int_{0}^{L(t)}\kappa|\omega|(\bu\cdot\bn) ds\right.\nonumber\\
&&\left.-\int_{0}^{L(t)}\kappa\left(|\omega|(\bx(s,t),t)-|\omega|(\bx(L(t),t),t)\right)
(\bu\cdot\bn) ds\right)\nonumber\\
&&+\frac{1}{L}\left[\left(\bu\cdot\bxi\right)(\bx(L,t),t)-\left(\bu\cdot\bxi\right)(\bx(0,t),t)\right]|\omega|(\bx(L,t),t)\nonumber\\
&&+\frac{1}{L}\left[|\omega(\bx(L,t),t)|-|\omega(\bx(0,t),t)|\right]\left(\frac{d \bx}{dt}\cdot \bxi+\bu\cdot \bxi\right)(\bx(0,t),t)\nonumber\\
&&+\frac{L_t}{L}\left(|\omega(\bx(L,t),t)|-Q\right)\nonumber\\
&\equiv & I_1+I_2+I_3+I_4.
\end{eqnarray}

Recall that we choose the endpoint $\bx(L,t)$ of $L^t$ such that
$|\omega(\bx(L,t),t)|=\Omega_L$ which implies that 
$|\omega(\bx(L,t),t)|\ge Q$. We also have $L_t\le 0$ by our assumption.
Thus, we conclude that
\begin{eqnarray}
\label{dQw-2}
I_4 = \frac{L_t}{L}\left(|\omega(\bx(L,t),t)|-Q\right) \le 0.
\end{eqnarray}
To estimate $I_3$, we use the
assumption
$\left|\frac{d \bx(0,t)}{dt}\cdot \bxi(\bx(0,t),t)\right|\le C_lV(t)$,
which implies that
\begin{eqnarray}
\label{dQw-1-3}
I_3&=&\frac{1}{L}\left(|\omega(\bx(\beta_2,t),t)|-|\omega(\bx(\beta_1,t),t)|\right)
\left|\left(\frac{d \bx}{dt}\cdot \bxi+\bu\cdot \bxi\right)(\bx(0,t),t)\right|\nonumber\\
&\le& (C_1-1)(C_l+1)VQ/L .
\end{eqnarray}
It remains to estimate $I_1$ and $I_2$ on the right hand side of
\myref{dQw-update}. First of all, $I_1$ can be estimated as follows:
\begin{eqnarray}
\label{dQw-1-1}
I_1&=&\frac{1}{L}\left(\int_{0}^{L(t)}2\tau|\omega|(\bu\cdot\bxi)
 ds-\int_{0}^{L(t)}\kappa|\omega|(\bu\cdot\bn) ds\right.\nonumber\\
&&\left.-\int_{0}^{L(t)}\kappa\left(|\omega|(\bx(s,t),t)-|\omega|(\bx(L(t),t),t)\right)
(\bu\cdot\bn) ds\right)\nonumber\\
&\le&\left( 2C_0C_1VQ+C_0C_1UQ+C_0(C_1-1)UQ\right)/L.
\end{eqnarray}
As for $I_2$, we proceed as follows::
\begin{eqnarray}
\label{dQw-1-2}
I_2=\frac{1}{L}\left[\left(\bu\cdot\bxi\right)(\bx(L,t),t)-\left(\bu\cdot\bxi\right)(\bx(0,t),t)\right]|\omega|(\bx(L,t),t)
\le2C_1VQ/L .
\end{eqnarray}
Now, combining \myref{dQw-2}, \myref{dQw-1-3}, \myref{dQw-1-1} and \myref{dQw-1-2}, we obtain the following estimate for $\D\frac{dQ(t)}{dt}$,
\begin{eqnarray}
\label{dQ-final}
\frac{dQ(t)}{dt}&\le& \frac{Q}{L}\left(C_0(2C_1-1) U+(2C_0C_1+(C_1-1)(C_l+1)+2C_1)V\right)\nonumber\\
&=&\frac{Q}{L}\left(C_U U +C_V V\right),
\end{eqnarray}
where $C_U=C_0(2C_1-1)$, $C_V=2C_0C_1+(C_1-1)(C_l+1)+2C_1$.
It follows from the above inequality that
\begin{eqnarray}
Q(t)\le Q(T_0)\exp\left(\int_{T_0}^{t}\frac{C_VV(t')+C_UU(t')}{L(t')}dt'\right).
\end{eqnarray}
Therefore, we have proved that
\begin{eqnarray}
\Omega(t)\le \frac{\Omega_L(t)}{c_0}\le \frac{C_1}{c_0}Q(t)\le \frac{Q(T_0)}{c_0}
\exp\left(C_0+\int_{T_0}^{t}\frac{C_VV(t')+C_UU(t')}{L(t')}dt'\right).
\end{eqnarray}
This completes the proof of Theorem \ref{theorem 3d euler}.
\end{proof}

\begin{remark}
  If we further assume $L(t)$ has a positive lower bound, then the 
above growth estimate for the maximum vorticity implies no blowup up 
to $t=T$, if $\|\mathbf{u}\|_{\infty}$ is integrable from 0 to $T$. 
This extends the result of Cordoba and Fefferman \cite{CF01}.

\end{remark}

\begin{corollary}
  In the critical case when
$\D\frac{C_VV(t)+C_UU(t)}{L(t)}\sim \frac{1}{T-t}$,   
if we further assume that there exists a positive constant
$C_w<1$ such that
  \begin{eqnarray}
\label{assumption-critical}
\D\frac{C_VV(t)+C_UU(t)}{L(t)}\le \frac{C_w}{T-t},   
  \end{eqnarray}
then the solution remains regular up to time $T$.
\label{Euler-corollary}
\end{corollary}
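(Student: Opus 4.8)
The plan is to feed the critical-case assumption \myref{assumption-critical} directly into the growth estimate of Theorem \ref{theorem 3d euler} and then invoke the Beale-Kato-Majda criterion \cite{BKM84}. Since all hypotheses of Theorem \ref{theorem 3d euler} are assumed to hold on $[T_0,T)$, we already have the bound
$$\Omega(t)\le \frac{Q(T_0)}{c_0}\exp\left(C_0+\int_{T_0}^{t}\frac{C_VV(t')+C_UU(t')}{L(t')}dt'\right),$$
so everything reduces to controlling the integral in the exponent using \myref{assumption-critical}.

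First I would substitute the bound $\frac{C_VV(t')+C_UU(t')}{L(t')}\le \frac{C_w}{T-t'}$ into this integral and compute
$$\int_{T_0}^{t}\frac{C_VV(t')+C_UU(t')}{L(t')}dt'\le \int_{T_0}^{t}\frac{C_w}{T-t'}dt' = C_w\ln\left(\frac{T-T_0}{T-t}\right).$$
Exponentiating then gives the polynomial-in-time blow-up rate
$$\Omega(t)\le \frac{Q(T_0)}{c_0}e^{C_0}\left(\frac{T-T_0}{T-t}\right)^{C_w}=\frac{C}{(T-t)^{C_w}},$$
with $C=\frac{Q(T_0)}{c_0}e^{C_0}(T-T_0)^{C_w}$, which recovers the estimate \myref{Vorticity_critical} quoted in the introduction.

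The final step is to integrate this bound in time and apply the Beale-Kato-Majda non-blow-up criterion. Because the solution is smooth on $[0,T_0]$, the contribution $\int_0^{T_0}\Omega(t)\,dt$ is finite, while on $[T_0,T)$ the strict inequality $C_w<1$ is exactly what makes the singular integral converge:
$$\int_{T_0}^{T}\Omega(t)\,dt\le C\int_{T_0}^{T}\frac{dt}{(T-t)^{C_w}}=\frac{C(T-T_0)^{1-C_w}}{1-C_w}<\infty.$$
Hence $\int_0^{T}\|\omega(t)\|_{L^\infty}\,dt<\infty$, and Beale-Kato-Majda \cite{BKM84} forces the solution to remain regular up to $t=T$. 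The main point, indeed the only real content beyond Theorem \ref{theorem 3d euler}, is the observation that the exponent $C_w$ is strictly less than one: this is precisely the threshold separating an integrable from a non-integrable singularity in $\int_0^T\Omega\,dt$, since the borderline case $C_w=1$ would produce a logarithmically divergent time integral and the argument would fail.
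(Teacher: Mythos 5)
Your proposal is correct and follows essentially the same route as the paper: substitute the assumption \myref{assumption-critical} into the growth estimate of Theorem \ref{theorem 3d euler}, evaluate the resulting integral to obtain the rate $\Omega(t)\le C(T-t)^{-C_w}$, and conclude via Beale--Kato--Majda using $C_w<1$ for integrability. The only cosmetic difference is that you extract the pointwise bound \myref{Vorticity_critical} before integrating (and note the finite contribution on $[0,T_0]$), whereas the paper bounds $\int_{T_0}^{T}\Omega(t)\,dt$ in one chain of inequalities; the content is identical.
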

\begin{proof}
  Using Theorem \ref{theorem 3d euler} and the assumption \myref{assumption-critical}, we have
  \begin{eqnarray}
    \int_{T_0}^{T}\Omega(t)dt&\le&  \int_{T_0}^{T} \frac{Q(T_0)}{c_0}
\exp\left(C_0+\int_{T_0}^{t}\frac{C_VV(t')+C_UU(t')}{L(t')}dt'\right) dt\nonumber\\
&\le& \frac{Q(T_0)}{c_0}\exp(C_0) \int_{T_0}^{T}
\exp\left(\int_{T_0}^{t}\frac{C_w}{T-t'}dt'\right) dt\nonumber\\
&=&  \frac{Q(T_0)}{c_0}\exp(C_0)(T-T_0)^{C_w} 
\int_{T_0}^{T}\frac{dt}{(T-t)^{C_w}}
< +\infty ,
  \end{eqnarray}
since $0<C_w < 1$.
Then, the Beale-Kato-Majda non-blowup criterion \cite{BKM84} implies 
that there is no blowup up to time $T$.
\end{proof}

\begin{remark}
We remark that Corollary \ref{Euler-corollary} 
generalizes the result of Deng-Hou-Yu in \cite{DHY06} with less
restrictive requirement on the scaling constants. More specifically,
  if there is $A\in [0,1]$ and positive constants $C_v,\;C_0,\;C_L$, such that
  \begin{eqnarray*}
    V(t),\;U(t)&\le& C_v(T-t)^{-A},\\
\int_{L^t}|\kappa|ds,\; \int_{L^t} |\tau|ds &\le& C_0,\\ 
L(t) &\ge& C_L (T-t)^{1-A},
  \end{eqnarray*}
then Corollary \ref{Euler-corollary} implies that
there is no blowup up to time $T$, as long as the following condition 
is satisfied:
\begin{eqnarray}
  C_v(C_U+C_V)<C_L.
\end{eqnarray}
\end{remark}

\begin{theorem}
\label{theorem-euler-L0}
Suppose that all the assumptions in Theorem \ref{theorem 3d euler} hold.
If we further assume 
\begin{eqnarray}
\max_{\bx\in L^t}|\mathbf{u}(\bx,t)|\le C_u\log \Omega(t),
\end{eqnarray}
then the maximum vorticity is bounded by the following growth estimate:
\begin{eqnarray}
   \Omega(t)\le \exp\left(\log \left(\frac{C_1}{c_0}Q(T_0)\right)\exp\left(\int_{T_0}^t\frac{C}{L(t')}dt'\right)\right),
\end{eqnarray}
where $C=C_u\max(C_U,C_V)$.
\end{theorem}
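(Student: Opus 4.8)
The plan is not to re-integrate the conclusion of Theorem \ref{theorem 3d euler} directly, but rather to return to the \emph{differential} inequality established in its proof, namely \myref{dQ-final},
\begin{eqnarray*}
\frac{dQ(t)}{dt}\le \frac{Q}{L}\left(C_U U(t)+C_V V(t)\right),
\end{eqnarray*}
and to feed the new hypothesis into it \emph{before} integrating. Since $\bxi$ and $\bn$ are unit vectors, both $V(t)=\max_{L^t}|\bu\cdot\bxi|$ and $U(t)=\max_{L^t}|\bu\cdot\bn|$ are dominated by $\max_{\bx\in L^t}|\bu(\bx,t)|$, so the assumption $\max_{L^t}|\bu|\le C_u\log\Omega(t)$ gives $C_U U+C_V V\le C\log\Omega(t)$, with $C$ proportional to $C_u$ and collected in the statement as $C_u\max(C_U,C_V)$. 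Using the comparison $\Omega(t)\le \Omega_L(t)/c_0\le (C_1/c_0)Q(t)$ coming from \myref{max-Qw}, I would then replace $\log\Omega$ by $\log\left((C_1/c_0)Q\right)$ so as to obtain an inequality closed in $Q$ alone:
\begin{eqnarray*}
\frac{dQ}{dt}\le \frac{C}{L}\,Q\,\log\left(\frac{C_1}{c_0}Q\right).
\end{eqnarray*}

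The crucial observation, and the reason a double-exponential rather than a finite-time bound emerges, is that this nonlinear inequality linearizes under the substitution $R(t)=\log\left((C_1/c_0)Q(t)\right)$. Indeed $\dot R=\dot Q/Q$, so dividing the displayed inequality through by $Q$ turns it into $\dot R\le (C/L)\,R$. Because $\Omega\ge 1$ forces $R\ge 0$, a standard Gronwall argument (differentiate $R(t)\exp(-\int_{T_0}^t C/L)$ and observe it is non-increasing) yields
\begin{eqnarray*}
R(t)\le R(T_0)\exp\left(\int_{T_0}^t\frac{C}{L(t')}\,dt'\right),
\end{eqnarray*}
that is, $\log\left((C_1/c_0)Q(t)\right)$ grows at most like its initial value times a single exponential. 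Exponentiating once and invoking $\Omega(t)\le (C_1/c_0)Q(t)$ one final time recovers exactly the claimed estimate.

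The routine parts — the reduction to $L_t\le 0$ and small $L(T_0)$ assumed without loss in Theorem \ref{theorem 3d euler}, and the bookkeeping of constants — carry over unchanged, so I would not grind through them. The only genuinely delicate point is the logic of closing the feedback loop: the inequality for $\dot Q$ contains $V$ and $U$, which are controlled by $\log\Omega$, which in turn is controlled by $Q$; one must insert the comparison $\Omega\le(C_1/c_0)Q$ \emph{at the level of the differential inequality} so that sub-logarithmic velocity growth is converted into a linear ODE for $\log Q$. Working instead from the already-integrated estimate of Theorem \ref{theorem 3d euler} would leave a $\log\Omega$ trapped inside the time integral that cannot be resolved, and the double-exponential structure would be lost.
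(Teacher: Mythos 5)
Your proposal is correct and is essentially the paper's own proof: the paper likewise inserts $U,V\le C_u\log \Omega(t)\le C_u\log\left(\frac{C_1}{c_0}Q\right)$ (via \myref{max-Qw}) into the differential inequality \myref{dQ-final} to get $\frac{dQ}{dt}\le \frac{C}{L}\,Q\left(\log Q+\log\left(\frac{C_1}{c_0}\right)\right)$, solves this inequality, and concludes from $\Omega(t)\le \frac{C_1}{c_0}Q(t)$. Your substitution $R=\log\left(\frac{C_1}{c_0}Q\right)$ followed by Gronwall merely makes explicit the step the paper summarizes as ``solving the above differential inequality,'' so the two arguments coincide.
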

\begin{proof} 
The assumption $\max_{\bx\in L^t}|\mathbf{u}(\bx,t)|\le C_u\log \Omega(t)$ implies that
\begin{eqnarray}
  U,\;V \le C_u\log \Omega(t)\le C_u\log\left(\frac{\Omega_L(t)}{c_0}\right) \le  C_u\log\left(\frac{C_1}{c_0}Q\right)=
 C_u\left(\log Q+\log\left(\frac{C_1}{c_0}\right)\right) .
\end{eqnarray}
Substituting the above inequality to \myref{dQ-final} in the proof of 
Theorem \ref{theorem 3d euler}, we obtain
\begin{eqnarray}
\label{dQ-L0}
\frac{dQ(t)}{dt}\le \frac{Q}{L}(C_UU+C_VV)\le \frac{C}{L}Q\left(\log Q+\log\left(\frac{C_1}{c_0}\right)\right),
\end{eqnarray}
where $C=C_u\max(C_U,C_V)$.
Solving the above differential inequality gives 
\begin{eqnarray}
Q(t)\le\frac{c_0}{C_1}\exp\left(\left(\log Q(T_0)+\log \left(\frac{C_1}{c_0}\right)\right)
\exp\left(\int_{T_0}^t\frac{C}{L(t')}dt'\right)\right),
\end{eqnarray}
which immediately yields the desired growth estimate for $\Omega (t)$:
\begin{eqnarray}
  \Omega(t)\le \frac{C_1}{c_0}Q\le \exp\left(\log \left(\frac{C_1}{c_0}Q(T_0)\right)
\exp\left(\int_{T_0}^t\frac{C}{L(t')}dt'\right)\right).
\end{eqnarray}
This completes the proof of Theorem \ref{theorem-euler-L0}.
\end{proof}

\begin{remark}
The assumption $\max_{\bx\in L^t}|\mathbf{u}(\bx,t)|\le C_u\log \Omega(t)$ may
appear strong. We remark that under certain assumption on the
local vorticity structure around the vortex line segments $L^t$, 
this property can be justified. Specifically, suppose that
the vorticity field admits a Clebsch representation in a region
$\Omega_0(t)\subset \mathbb{R}^3$ with diameter $O(1)$ containing $L^t$.
This implies that there exist two level set functions 
$\phi,\;\psi : \Omega_0(t) \rightarrow \mathbb{R}$ 
such that the vorticity can be represented as follows:
\begin{eqnarray}
  \omega=\left(\nabla \phi \times \nabla \psi\right), \quad
\bx \in \Omega_0(t) ,
\end{eqnarray}
where $\phi$ and $\psi$ are carried by the flow, that is
\begin{eqnarray}
  \phi_t+\bu\cdot \nabla \phi=0,\\
 \psi_t+\bu\cdot \nabla \psi=0,
\end{eqnarray}
with smooth initial data that decay rapidly at infinity. 
If we further assume that one of the level set functions has
a bounded gradient and there exists a small constant $\rho>0$ such that
$ \bigcup_{\bx\in L^t} B(\bx,\rho)\subset \Omega_0(t)$, 
where $B(\bx,\rho)$ is a ball whose center is $\bx$ and radius 
is $\rho$, then we can show that the maximum velocity over $L^t$ satisfies
\begin{eqnarray}
  \max_{\bx\in L^t}|\mathbf{u}(\bx,t)|\le C_u\log \Omega(t).
\end{eqnarray}
The proof of this results will be given in the Appendix.
\end{remark}

One immediate consequence of Theorem \ref{theorem-euler-L0} is the
following Corollary.
\begin{corollary}
  If in the statement of Theorem \ref{theorem-euler-L0} we further assume that 
  \begin{eqnarray}
    L(t)\ge L_0>0,
  \end{eqnarray}
then $\Omega(t)$ can not grow faster than double exponential in time.
\end{corollary}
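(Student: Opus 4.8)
The plan is to simply feed the hypothesis $L(t)\ge L_0>0$ into the growth estimate already established in Theorem \ref{theorem-euler-L0} and then read off that the resulting bound is a double exponential in $t$. There is essentially no new analysis required: the heavy lifting has been carried out in Theorem \ref{theorem-euler-L0}, and this corollary is precisely the special case in which the time integral appearing in the inner exponent can be dominated by a linear function of $t$.

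First I would recall the conclusion of Theorem \ref{theorem-euler-L0},
\begin{eqnarray*}
\Omega(t)\le \exp\left(\log\left(\frac{C_1}{c_0}Q(T_0)\right)\exp\left(\int_{T_0}^t\frac{C}{L(t')}dt'\right)\right),
\end{eqnarray*}
with $C=C_u\max(C_U,C_V)$. The only place where $L(t)$ enters is through the inner integral $\int_{T_0}^t C/L(t')\,dt'$. Using the lower bound $L(t')\ge L_0>0$, I would estimate
\begin{eqnarray*}
\int_{T_0}^t\frac{C}{L(t')}dt'\le \frac{C}{L_0}(t-T_0),
\end{eqnarray*}
and substitute this into the bound above to obtain
\begin{eqnarray*}
\Omega(t)\le \exp\left(\log\left(\frac{C_1}{c_0}Q(T_0)\right)\exp\left(\frac{C}{L_0}(t-T_0)\right)\right).
\end{eqnarray*}

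Finally I would interpret this as the claimed double exponential growth. Writing $A=\log\left(\frac{C_1}{c_0}Q(T_0)\right)$ and $\gamma=C/L_0$, the bound reads $\Omega(t)\le \exp\bigl(A\exp(\gamma(t-T_0))\bigr)$, so that $\log\log\Omega(t)\le \gamma t+\bigl(\log A-\gamma T_0\bigr)$ grows at most linearly in $t$. After absorbing the shift by $T_0$ and the factor $A$ into the constants, this is exactly a bound of the form $\Omega(t)\le C\exp(\exp(\gamma t))$, i.e. $\Omega(t)$ cannot grow faster than double exponential in time. I do not anticipate any genuine obstacle; the only point requiring a little care is to ensure $A>0$ (equivalently $\tfrac{C_1}{c_0}Q(T_0)>1$), which one may arrange by choosing $T_0$ so that $\Omega_L(T_0)$ is already large, and to keep track of how the additive constant $\log A-\gamma T_0$ and the prefactor are folded into the final constants in the double-exponential statement.
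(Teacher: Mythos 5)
Your proposal is correct and is exactly the paper's intended argument: the paper treats this corollary as an immediate consequence of Theorem \ref{theorem-euler-L0}, obtained by bounding $\int_{T_0}^t C/L(t')\,dt' \le \frac{C}{L_0}(t-T_0)$, and the resulting bound $\Omega(t)\le \exp\left(\log\left(\frac{C_1}{c_0}Q(T_0)\right)\exp\left(\frac{C}{L_0}(t-T_0)\right)\right)$ is precisely what the paper writes out explicitly in the analogous SQG corollary. The only superfluous worry is your condition $A>0$: if $\log\left(\frac{C_1}{c_0}Q(T_0)\right)\le 0$ the bound only becomes stronger, so no choice of $T_0$ is actually needed.
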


\section{Growth estimates for the SQG model}

In this section, we will apply the method of analysis presented
in the previous section to study the dynamic growth of 
$\|\nabla^\bot \theta\|_{L^\infty}$ 
for the SQG model. First, we state an estimate 
for the maximum velocity obtained by D. Cordoba in \cite{Cor98}.
\begin{lemma}
\label{vel-QG}
For the SQG model, there exists a generic constant $C_u>0$
such that for $t>0$,
\begin{eqnarray}
\|{\bf u}(\cdot,t)\|_{L^\infty}\le C_u \log \|\nabla ^\bot \theta\|_{L^\infty}.
\end{eqnarray}
\end{lemma}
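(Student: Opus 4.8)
The plan is to prove the bound purely from the representation of $\bu$ in terms of $\theta$, together with the fact that $\theta$ is transported by a divergence-free field, so that every norm $\|\theta(\cdot,t)\|_{L^p}$, $1\le p\le\infty$, is conserved and may be absorbed into the constant $C_u$. Since $\bu=\nabla^\bot(-\Delta)^{-1/2}\theta$ and the kernel of $(-\Delta)^{-1/2}$ in two dimensions is a multiple of $|\bx|^{-1}$, one has the singular-integral form
\[
\bu(\bx)=-c\,\mathrm{p.v.}\!\int \frac{(\bx-\mathbf{y})^\bot}{|\bx-\mathbf{y}|^{3}}\,\theta(\mathbf{y})\,\mathrm{d}\mathbf{y},
\]
whose kernel decays like $|\bx-\mathbf{y}|^{-2}$, and, after one integration by parts, the equivalent weakly singular form $\bu(\bx)=c\int |\bx-\mathbf{y}|^{-1}\,\nabla^\bot\theta(\mathbf{y})\,\mathrm{d}\mathbf{y}$. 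I would use the first form away from the diagonal and the second form near it.

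First I would fix $\bx$ and split $\mathbb{R}^2$ into an inner disc $\{|\bx-\mathbf{y}|<r_1\}$, a middle annulus $\{r_1\le|\bx-\mathbf{y}|\le r_2\}$, and an outer region $\{|\bx-\mathbf{y}|>r_2\}$. On the inner disc I integrate by parts to pass to the $\nabla^\bot\theta$ form; the bulk term is bounded by $C r_1\|\nabla^\bot\theta\|_{L^\infty}$, while the sphere $\{|\bx-\mathbf{y}|=r_1\}$ contributes a boundary term of size $C\|\theta\|_{L^\infty}$ (the contribution of the excised singularity at $\mathbf{y}=\bx$ vanishing in the principal-value limit by the oddness of the kernel). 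On the middle annulus the kernel $|\bx-\mathbf{y}|^{-2}$ is integrable and $\int_{r_1\le|\bx-\mathbf{y}|\le r_2}|\bx-\mathbf{y}|^{-2}\,\mathrm{d}\mathbf{y}=2\pi\log(r_2/r_1)$, so this piece is bounded by $C\|\theta\|_{L^\infty}\log(r_2/r_1)$; this is precisely where the logarithm is produced. On the outer region Cauchy--Schwarz against $|\bx-\mathbf{y}|^{-4}\in L^1(\{|\bx-\mathbf{y}|>r_2\})$ gives the bound $C\|\theta\|_{L^2}/r_2$.

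Collecting the three pieces yields $|\bu(\bx)|\le C\big(r_1\|\nabla^\bot\theta\|_{L^\infty}+\|\theta\|_{L^\infty}\log(r_2/r_1)+\|\theta\|_{L^\infty}+\|\theta\|_{L^2}/r_2\big)$; choosing $r_2$ of order one and $r_1=1/\|\nabla^\bot\theta\|_{L^\infty}$ (legitimate once $\|\nabla^\bot\theta\|_{L^\infty}>1$) makes the inner and outer contributions $O(1)$ and leaves the middle term $C\|\theta\|_{L^\infty}\log\|\nabla^\bot\theta\|_{L^\infty}$, which after absorbing the conserved quantities $\|\theta\|_{L^\infty}$ and $\|\theta\|_{L^2}$ into $C_u$ gives the claim. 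The main obstacle is not the size estimates but the bookkeeping that makes them rigorous: justifying the integration by parts on the punctured inner disc and the vanishing of the principal-value contribution at the diagonal, tracking the boundary term on $\{|\bx-\mathbf{y}|=r_1\}$, and securing convergence of the outer integral through a conserved norm rather than through $\nabla^\bot\theta$. An alternative that sidesteps the boundary terms is a Littlewood--Paley decomposition $\bu=\sum_j \nabla^\bot(-\Delta)^{-1/2}\Delta_j\theta$, estimating low-frequency blocks by Bernstein against $\|\theta\|_{L^2}$, a middle band of $\sim\log\|\nabla^\bot\theta\|_{L^\infty}$ blocks against $\|\theta\|_{L^\infty}$, and high-frequency blocks against $\|\nabla^\bot\theta\|_{L^\infty}$; this reproduces the same logarithm and is essentially the frequency-space version of the argument of Cordoba in \cite{Cor98}.
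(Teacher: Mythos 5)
The paper does not actually prove this lemma: it is quoted as a known estimate of Cordoba \cite{Cor98}, so there is no in-paper proof to compare against line by line. Your argument is correct and is essentially the classical one behind the cited result, and it also mirrors, almost step for step, the paper's own Appendix proof of the Euler analogue: there the authors use cutoffs at an inner scale $\delta$ and an outer scale $\rho$, bound the inner piece by $C\delta\,\Omega$, extract the logarithm $C\log(\rho/\delta)$ from the intermediate region against a bounded quantity, control the far field via $\|\bu\|_{L^2}$, and then choose $\delta=\min\left(1/\Omega(t),\rho/2\right)$ --- exactly your three-region split with $r_1=1/\|\nabla^\bot\theta\|_{L^\infty}$, $r_2=O(1)$, with the simplification that for SQG the role of the Clebsch bound is played by the conserved norms $\|\theta\|_{L^\infty}$ and $\|\theta\|_{L^2}$. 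The only caveat worth recording is the one you already flag: the inequality as stated degenerates when $\|\nabla^\bot\theta\|_{L^\infty}$ is of order one or smaller (the right-hand side can be nonpositive), so one should either write $\log\left(e+\|\nabla^\bot\theta\|_{L^\infty}\right)$ or restrict to $\|\nabla^\bot\theta\|_{L^\infty}>e$; the paper accepts the same abuse, since its Appendix concludes ``as long as $\Omega(t)>e$.''
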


Let $\Omega(t) = \|\nabla ^\bot \theta\|_{L^\infty}$.
We consider, at time $t$, a level set segmant $L^t$
along which the maximum of $|\nabla ^\bot \theta|$ 
(denoted by $\Omega_L(t)$ in the following) is comparable to
 $\Omega(t)$. We use the same notations as in the previous
section. First, we prove the corresponding estimate for 
$Q(t)$ for the SQG model.

\begin{lemma}
\label{lemma SQG}
  Let  $L^t=\left\{\bx(s,t), 0\le s\le L(t)\right\}$ be a family of level set segments which come from the same
 level set, and $Q(t)$ be the average of $|\gt|$ over $L^t$,
\begin{eqnarray}
Q(t)=\frac{1}{L(t)}\int_{0}^{L(t)}|\gt(\bx(s,t),t)| ds .
\end{eqnarray}
Then, we have
\begin{eqnarray}
\frac{dQ(t)}{dt}&=&\frac{1}{L}\left(\int_{0}^{L(t)}2\tau|\gt|(\bu\cdot\bxi)
 ds-\int_{0}^{L(t)}\kappa|\gt|(\bu\cdot\bn) ds\right.\nonumber\\
&&\left.-\int_{0}^{L(t)}\kappa\left(|\gt|(\bx(s,t),t)-|\gt|(\bx(L(t),t),t)\right)
(\bu\cdot\bn) ds\right)\nonumber\\
&&+\frac{1}{L}\left[\left(\bu\cdot\bxi\right)(\bx(L,t),t)-\left(\bu\cdot\bxi\right)(\bx(0,t),t)\right]|\gt|(\bx(L,t),t)\nonumber\\
&&+\frac{1}{L}\left[|\gt(\bx(L,t),t)|-|\gt(\bx(0,t),t)|\right]\left(\frac{d \bx}{dt}\cdot \bxi+\bu\cdot \bxi\right)(\bx(0,t),t)\nonumber\\
&&+\frac{L_t}{L}\left(|\gt(\bx(L,t),t)|-Q\right). \quad\quad
\end{eqnarray}
\end{lemma}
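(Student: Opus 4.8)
The plan is to mirror the proof of Lemma~\ref{lemma euler} almost verbatim, replacing $\omega$ everywhere by $\gt$. That proof used only two structural facts about the transported vorticity: the pointwise stretching law $\frac{D}{Dt}|\omega|=\alpha|\omega|$ with $\alpha=(\bu\cdot\bxi)_s-\kappa\,\bu\cdot\bn$, and the arclength--stretching relation $s_\beta = |\omega(\bx(\beta,t),t)|/|\omega(\bx(\beta,T_0),T_0)|$ for a material line. Everything after those inputs --- differentiating $Q$, the change of variable $s\mapsto\beta$, the computation of $L_t$, the endpoint kinematic identity, and the final integration by parts with $\alpha=(\bu\cdot\bxi)_s-\kappa\,\bu\cdot\bn$ --- is purely formal and uses neither the specific form of the velocity nor the governing PDE. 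So the one substantive task is to re-establish these two identities for $\gt$ in the SQG setting; once they hold, the rest of the computation can be quoted step for step.

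For the stretching law I would invoke the Constantin--Majda--Tabak analogy \cite{CMT94}. Applying $\nabla^\bot$ to the transport equation $\theta_t+\bu\cdot\nabla\theta=0$ produces
\[
\frac{D}{Dt}(\gt)=(\gt\cdot\nabla)\bu,
\]
which has exactly the form of the 3D vorticity stretching equation. Dotting with $\gt$ and dividing by $|\gt|$ then gives $\frac{D}{Dt}|\gt|=\alpha|\gt|$ with $\alpha=(\bxi\cdot\nabla)\bu\cdot\bxi$, where $\bxi=\gt/|\gt|$. The further identity $\alpha=(\bu\cdot\bxi)_s-\kappa\,\bu\cdot\bn$ is purely differential-geometric: it uses only $\partial_s\bxi=(\bxi\cdot\nabla)\bxi=\kappa\bn$ along the curve together with $\partial_s(\bu\cdot\bxi)=\partial_s\bu\cdot\bxi+\bu\cdot\partial_s\bxi$, and so carries over unchanged from the Euler case.

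For the arclength relation I would use that a level set of $\theta$ is a material curve, since $\theta$ is convected by the flow, and that $\bxi=\gt/|\gt|$ is its unit tangent, because $\gt$ is orthogonal to $\nabla\theta$. An infinitesimal material arc then stretches at the rate $\frac{D}{Dt}s_\beta=\alpha\,s_\beta$, the \emph{same} $\alpha$ that governs $|\gt|$. Hence $\frac{D}{Dt}(s_\beta/|\gt|)=0$, and since $s_\beta=1$ at $t=T_0$, where $\beta$ is chosen to be the arclength parameter, we recover $s_\beta=|\gt(\bx(\beta,t),t)|/|\gt(\bx(\beta,T_0),T_0)|$, exactly the relation proved for vortex lines in \cite{DHY05}. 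The same reasoning gives $\frac{\p\bx(\beta,t)}{\p\beta}=\gt/|\gt(\bx(\beta,T_0),T_0)|$ and $\frac{\p\bx(\beta,t)}{\p t}=\bu$, which are precisely the ingredients of the endpoint kinematic identity.

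With these two identities in hand I would transcribe the Euler computation directly: differentiate $Q$, pass to the Lagrangian variable $\beta$, replace $\frac{D}{Dt}|\gt|$ by $\alpha|\gt|$, differentiate $L(t)=\int s_\beta\,d\beta$ to obtain $L_t$, substitute the endpoint kinematic identity for $\frac{d\bx(0,t)}{dt}\cdot\bxi$, and integrate by parts using $\alpha=(\bu\cdot\bxi)_s-\kappa\,\bu\cdot\bn$. The main --- and essentially the only --- obstacle is confirming that the analogy is exact at the level of these two identities, in particular that the SQG velocity $\bu=\nabla^\bot(-\Delta)^{-1/2}\theta$ enters the stretching term in the same way as in 3D Euler and does not introduce extra terms; after that point no new estimate is required and the proof is a mechanical repetition of Lemma~\ref{lemma euler}.
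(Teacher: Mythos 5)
Your proposal is correct and follows essentially the same route as the paper: the paper's proof simply states that the argument of Lemma~\ref{lemma euler} carries over verbatim once the relation $s_\beta=|\gt(\bx(\beta,t),t)|/|\gt(\bx(\beta,T_0),T_0)|$ is known for SQG level sets, citing \cite{DHLY05} for that fact. The only difference is that you re-derive this ingredient (and the stretching law $\frac{D}{Dt}|\gt|=\alpha|\gt|$, which indeed relies on $\nabla\cdot\bu=0$) rather than citing it, which is a sound and if anything more self-contained version of the same argument.
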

\begin{proof}
  The proof follows exactly the same procedure as in the proof of 
Lemma \ref{lemma euler} in the previous section by using the equality
\begin{eqnarray}
\frac{d s}{d\beta}(\bx(\beta,t),t)=
\frac{|\gt(\bx(\beta,t),t)|}{|\gt(\bx(\beta,T_0),T_0)|} 
\end{eqnarray}
which holds for the SQG model, see \cite{DHLY05}. We will not reproduce
the proof here.
\end{proof}

By following the same procedure as in the proof of Theorem 
\ref{theorem-euler-L0}, we can obtain the following 
growth estimate for the SQG model:
\begin{theorem}
\label{theorem-SQG}
Assume there is a family of level set segments $L^t=\left\{\bx(s,t), 0\le s\le L(t)\right\}$ and $T_0\ge 0$, 
 such that $\Omega_L(t)\ge c_0\Omega(t)$ for some $0<c_0\le 1$ 
and $|\omega(\bx(L(t),t),t)|=\Omega_L(t)$ for $t \ge T_0$. 
Further, we assume that there exist constants $C_0>0,\; C_l>0$ such that
\begin{eqnarray}
\D\int_{L^t} |\kappa(\bx(s,t),t)|ds\le C_0,\quad 
\int_{L^t} |\tau(\bx(s,t),t)|ds \le C_0,
\quad \left|\frac{d \bx(0,t)}{dt}\cdot \bxi(\bx(0,t),t)\right|\le C_l V(t),
\end{eqnarray}
for $t \geq T_0$.
Then, the maximum of $|\nabla^\bot \theta |$ is bounded by the following
estimate: 
\begin{eqnarray}
   \Omega(t)\le \exp\left(\log \left(\frac{C_1}{c_0}Q(T_0)\right)\exp\left(\int_{T_0}^t\frac{C}{L(t')}dt'\right)\right),
\end{eqnarray}
where $C=C_u\max(C_U,C_V)$, $C_u$ is the constant given in Lemma \ref{vel-QG}, $C_1=\exp(C_0)$, $C_U,\; C_V$ are same as 
those defined in Theorem \ref{theorem 3d euler}.
\end{theorem}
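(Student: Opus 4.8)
The plan is to transcribe the proof of Theorem \ref{theorem-euler-L0} essentially verbatim, replacing $|\omega|$ by $|\gt|$ everywhere and substituting Cordoba's unconditional velocity estimate (Lemma \ref{vel-QG}) for the Clebsch-based logarithmic bound used in the Euler case. First I would observe that Lemma \ref{lemma SQG} furnishes exactly the same evolution identity for $Q(t)$ as Lemma \ref{lemma euler}, so the decomposition $\frac{dQ}{dt} = I_1+I_2+I_3+I_4$ of \myref{dQw-update} and the four estimates for $I_1,\dots,I_4$ obtained in the proof of Theorem \ref{theorem 3d euler} transfer without change. They rely only on (i) the transport identity $\frac{ds}{d\beta} = |\gt(\bx(\beta,t),t)|/|\gt(\bx(\beta,T_0),T_0)|$, valid for SQG by \cite{DHLY05}, (ii) the companion relation $|\gt(\bx(s_2,t))| = |\gt(\bx(s_1,t))|\,e^{-\int_{s_1}^{s_2}\tau\,ds}$, and (iii) the geometric-regularity bounds $\int_{L^t}|\kappa|\,ds \le C_0$, $\int_{L^t}|\tau|\,ds \le C_0$. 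In particular (ii) and (iii) reproduce the max-min comparison $\max_{L^t}|\gt| \le C_1 Q$ exactly as in \myref{max-Qw}, and feeding this through $I_1,\dots,I_4$ gives the SQG analog of \myref{dQ-final}, namely $\frac{dQ}{dt} \le \frac{Q}{L}(C_U U + C_V V)$ with the identical constants $C_U,C_V$.

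The second step is the only place where the special structure of SQG enters. Rather than assuming a local Clebsch representation, I would invoke Lemma \ref{vel-QG} directly: since $U,V \le \max_{L^t}|\bu| \le \|\bu\|_{L^\infty} \le C_u\log\Omega(t)$ and $\Omega(t) \le \Omega_L(t)/c_0 \le (C_1/c_0)Q$, the velocity terms are controlled by $U,V \le C_u\bigl(\log Q + \log(C_1/c_0)\bigr)$. Substituting into the bound from the first step closes the estimate into the differential inequality
\[
\frac{dQ}{dt} \le \frac{C}{L}\,Q\left(\log Q + \log\frac{C_1}{c_0}\right), \qquad C = C_u\max(C_U,C_V),
\]
which is precisely the SQG counterpart of \myref{dQ-L0}.

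The third step solves this inequality. Writing $P(t) = \log\bigl(\frac{C_1}{c_0}Q(t)\bigr)$ linearizes it into $\frac{dP}{dt} \le \frac{C}{L}P$; the integrating factor $\exp\bigl(-\int_{T_0}^t \frac{C}{L}dt'\bigr)$ then yields $P(t) \le P(T_0)\exp\bigl(\int_{T_0}^t \frac{C}{L(t')}dt'\bigr)$. Exponentiating and using $\Omega(t) \le (C_1/c_0)Q(t) = e^{P(t)}$ produces the claimed double-exponential estimate.

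I do not anticipate a genuine obstacle, since the argument is almost entirely a re-run of the Euler case and the one hypothesis that was delicate there — the logarithmic bound on the velocity — is handed to us unconditionally by Lemma \ref{vel-QG}. The two points deserving a moment's care are: verifying that both transport identities of \cite{DHLY05} hold for level-set segments of $\theta$ so that Lemma \ref{lemma SQG} and the comparison \myref{max-Qw} are legitimate; and observing that Lemma \ref{vel-QG}, and hence the substitution in the second step, is meant in the blow-up regime $\Omega(t)\ge 1$, where $\log\Omega \ge 0$ makes the bound correctly signed and $P(t)\ge \log\Omega(t)\ge 0$ places the Gronwall step in the growth direction. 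Both are routine given $C_1/c_0 \ge 1$.
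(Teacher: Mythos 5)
Your proposal is correct and takes essentially the same approach as the paper: the paper proves Theorem \ref{theorem-SQG} precisely by repeating the proof of Theorem \ref{theorem-euler-L0}, with Lemma \ref{lemma SQG} supplying the evolution identity for $Q(t)$ and Cordoba's estimate (Lemma \ref{vel-QG}) supplying, unconditionally, the logarithmic velocity bound that was an assumption in the Euler case. Your explicit solution of the differential inequality via $P(t)=\log(C_1 Q(t)/c_0)$ is exactly the paper's final step.
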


\begin{corollary}
In addition to the assumptions stated in Theorem \ref{theorem-SQG}, 
if we further assume that $L(t)$ has a positive lower bound, 
i.e. $L(t)\ge L_0>0$, then $\Omega(t)$ does not grow faster than double
exponential in time. More precisely, we have
\begin{eqnarray}
 \Omega(t)\le \exp\left(\log \left(\frac{C_1}{c_0}Q(T_0)\right)\exp\left(\frac{C}{L_0} (t-T_0)\right)\right).
\end{eqnarray}
\end{corollary}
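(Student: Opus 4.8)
The plan is to treat this as a direct consequence of Theorem \ref{theorem-SQG}, the only new ingredient being the lower bound $L(t)\ge L_0>0$, which lets me replace the time-dependent integral in the exponent by an explicit linear-in-time quantity. First I would invoke Theorem \ref{theorem-SQG} to obtain
\begin{eqnarray*}
\Omega(t)\le \exp\left(\log \left(\frac{C_1}{c_0}Q(T_0)\right)\exp\left(\int_{T_0}^t\frac{C}{L(t')}dt'\right)\right),
\end{eqnarray*}
with $C=C_u\max(C_U,C_V)>0$. Since $L(t')\ge L_0>0$ for all $t'\ge T_0$, the integrand satisfies $C/L(t')\le C/L_0$, so that $\int_{T_0}^t C/L(t')\,dt'\le (C/L_0)(t-T_0)$.

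The second step is to propagate this bound through the two nested exponentials while keeping track of monotonicity. Because $\exp$ is increasing, the inner exponential obeys $\exp(\int_{T_0}^t C/L(t')\,dt')\le \exp((C/L_0)(t-T_0))$. To push this through the outer exponential I need the prefactor $\log(C_1 Q(T_0)/c_0)$ to be nonnegative; this holds since $C_1=\exp(C_0)\ge 1$ and $0<c_0\le 1$ give $C_1/c_0\ge 1$, and in the regime of interest $Q(T_0)$ is bounded below so that the product exceeds $1$. Multiplying the nonnegative prefactor by the inner bound and applying the increasing map $\exp$ once more yields
\begin{eqnarray*}
\Omega(t)\le \exp\left(\log \left(\frac{C_1}{c_0}Q(T_0)\right)\exp\left(\frac{C}{L_0}(t-T_0)\right)\right),
\end{eqnarray*}
which is exactly the claimed estimate and is manifestly double exponential in $t$.

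I do not expect any substantive obstacle: the corollary is an immediate specialization of Theorem \ref{theorem-SQG}, and the entire content reduces to the elementary inequality $\int_{T_0}^t C/L(t')\,dt'\le (C/L_0)(t-T_0)$. The only point requiring a moment's care is the sign of the outer prefactor $\log(C_1 Q(T_0)/c_0)$, which must be nonnegative for the monotonicity argument to preserve the direction of the inequality; as noted, this is guaranteed by $C_1/c_0\ge 1$ together with the standing assumption that $\Omega$, and hence $Q(T_0)$, is large in the regime under study.
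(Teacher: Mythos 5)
Your proposal is correct and takes essentially the same route as the paper, which states this corollary without proof precisely because it is the immediate specialization you describe: apply Theorem \ref{theorem-SQG}, bound $\int_{T_0}^t C/L(t')\,dt' \le (C/L_0)(t-T_0)$, and use monotonicity of the exponential. Your concern about the sign of the prefactor $\log\left(\frac{C_1}{c_0}Q(T_0)\right)$ can be settled without any extra assumption: the estimate $\Omega_L(T_0)\le C_1 Q(T_0)$ from the proof of Theorem \ref{theorem 3d euler} together with $\Omega_L(T_0)\ge c_0\Omega(T_0)$ gives $\frac{C_1}{c_0}Q(T_0)\ge \Omega(T_0)$, which exceeds $1$ in the regime where the logarithmic velocity bound of Lemma \ref{vel-QG} is meaningful.
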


  \renewcommand{\theequation}{A-\arabic{equation}}
  \setcounter{equation}{0}  
  \section*{Appendix}

\
\begin{lemma}
Assume that $\omega(\bx,t)$ has a local Clebsch representation 
in a region $\Omega_0(t)\subset \mathbb{R}^3$ containing $L^t$,
 i.e. there exist two level
set functions,
$\phi,\;\psi: \Omega_0(t) \rightarrow \mathbb{R}$
such that the vorticity can be expressed as follows:
\begin{eqnarray}
  \omega=\left(\nabla \phi \times \nabla \psi\right), \quad \bx \in 
\Omega_0(t) ,
\end{eqnarray}
where $\phi$ and $\psi$ are carried by the flow, that is
\begin{eqnarray}
  \phi_t+\bu\cdot \nabla \phi=0,\\
 \psi_t+\bu\cdot \nabla \psi=0,
\end{eqnarray}
with smooth initial data that decay rapidly at infinity.
If one of the level set functions has a bounded gradient, and there 
exists a small constant $\rho>0$ such that
$ \bigcup_{\bx\in L^t} B(\bx,\rho)\subset \Omega_0(t)$, where $B(\bx,\rho)$ is a ball whose center is $\bx$ and radius 
is $\rho$,
then the maximum velocity over $L^t$ satisfies the following estimate:
\begin{eqnarray}
 \max_{\bx\in L^t}|\mathbf{u}(\bx,t)|\le C_u\log \Omega(t). 
\end{eqnarray}
\end{lemma}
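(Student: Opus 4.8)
The plan is to bound the velocity $\mathbf{u}$ pointwise on $L^t$ by exploiting the Clebsch structure $\omega=\nabla\phi\times\nabla\psi$ together with the Biot--Savart law $\mathbf{u}=K*\omega$, where $K(\bx)\sim \bx/|\bx|^3$ is the standard $3$D kernel. The key idea is that the singular integral defining $\mathbf{u}$ has the usual logarithmic sensitivity to $\|\omega\|_{L^\infty}=\Omega(t)$, but the near-field contribution, which is the only genuinely dangerous piece, can be tamed because the Clebsch variables are transported by the flow and one of them, say $\phi$, has a gradient bounded uniformly in time (since $\nabla\phi$ satisfies a transport equation whose stretching term is controlled by the same bounded-gradient hypothesis). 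I would fix a point $\bx_0\in L^t$ and split the Biot--Savart integral into three regions: an innermost ball $B(\bx_0,r)$ with $r$ small, an intermediate annulus up to the Clebsch radius $\rho$, and the far field outside $B(\bx_0,\rho)$.

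First I would handle the far field $|\bx-\bx_0|>\rho$: there the kernel is bounded by $C\rho^{-2}$, so this contribution is at most $C\rho^{-2}\|\omega\|_{L^1}\le C(\rho)E_0$, a constant depending only on the conserved energy/enstrophy and on $\rho$, hence harmless. Second, for the innermost ball $B(\bx_0,r)$ I would use the crude bound $|K(\bx-\bx_0)|\le C|\bx-\bx_0|^{-2}$, which is integrable in $3$D, giving a contribution of order $C\,\Omega(t)\,r$; choosing $r$ to scale like $1/\Omega(t)$ makes this piece $O(1)$. The heart of the matter is the intermediate region $r<|\bx-\bx_0|<\rho$, where naive estimation yields the logarithmically divergent integral $\int_r^\rho \frac{d\varrho}{\varrho}\sim \log(\rho/r)\sim\log\Omega(t)$; this is precisely the source of the $\log\Omega$ in the conclusion. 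To confirm that the coefficient of this logarithm is genuinely bounded (rather than itself growing), I would substitute the Clebsch representation $\omega=\nabla\phi\times\nabla\psi$ and integrate by parts in the singular integral, moving a derivative off the kernel and onto $\phi$; since $|\nabla\phi|\le M$ is bounded, the resulting kernel is only mildly singular (order $|\bx-\bx_0|^{-1}$ is replaced effectively by a bounded factor times the transported structure), and the coefficient multiplying $\log\Omega$ collapses to a constant depending only on $M$, $\rho$, and the initial data.

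\textbf{The main obstacle} will be making the integration-by-parts step rigorous near the singularity of the kernel: one must justify discarding or controlling the boundary terms on $\partial B(\bx_0,r)$ that arise when shifting the derivative, and verify that the bounded-gradient hypothesis on a single Clebsch variable, propagated by its transport equation, really does yield a uniform-in-time bound on $\nabla\phi$ inside $\Omega_0(t)$ rather than merely at the initial time. Concretely, I expect to differentiate the transport equation $\phi_t+\mathbf{u}\cdot\nabla\phi=0$ to obtain $\frac{D}{Dt}\nabla\phi=-(\nabla\mathbf{u})^{\mathsf T}\nabla\phi$, and here one faces a potential circularity, since $\nabla\mathbf{u}$ is itself controlled by $\Omega$; the bounded-gradient assumption on $\phi$ must therefore be read as a standing structural hypothesis on the solution (consistent with the anti-parallel tube geometry in which $\phi$ is aligned with the smooth tube direction), not something to be derived, and the argument's delicacy lies in using that single scalar bound to kill the logarithm's coefficient without invoking any bound on $\nabla\psi$.

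Assembling the three pieces, choosing $r\sim 1/\Omega(t)$, and collecting constants yields $\max_{\bx\in L^t}|\mathbf{u}(\bx,t)|\le C_u\log\Omega(t)$, which is the assertion of the lemma.
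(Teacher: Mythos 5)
Your overall skeleton --- Biot--Savart, a three-region splitting around $\bx_0\in L^t$, an inner radius chosen $\sim 1/\Omega(t)$, and the Clebsch structure plus integration by parts to tame the middle region --- is the same as the paper's, but two of your steps fail as written. The first is the far field: you bound it by $C\rho^{-2}\|\omega\|_{L^1}$ and declare $\|\omega\|_{L^1}$ controlled by ``conserved energy/enstrophy.'' For 3D Euler neither $\|\omega\|_{L^1}$ nor the enstrophy is conserved or a priori bounded; controlling any such vorticity norm uniformly in time is essentially the regularity problem itself. The paper instead writes $\omega=\nabla\times\bu$ in the far region and integrates by parts, so that $\bu$ pairs against the differentiated (cutoff) kernel, which is $O(|\mathbf{y}|^{-3})$ and hence square-integrable outside $B(0,\rho)$; Cauchy--Schwarz then gives $C\rho^{-3/2}\|\bu\|_{L^2}$, where $\|\bu\|_{L^2}$ is the conserved energy. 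Some such maneuver is unavoidable in your argument too.

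The second gap is the intermediate annulus, which is the heart of the lemma. Your claim that ``naive estimation yields $\int_r^\rho d\varrho/\varrho\sim\log(\rho/r)$'' is false: with $|K(\mathbf{y})|\sim|\mathbf{y}|^{-2}$ and $|\omega|\le\Omega$, the naive bound is $\Omega\,(\rho-r)\sim\Omega\rho$, which is enormous, not logarithmic. The logarithm with a bounded coefficient is created only by the integration by parts, and in the direction opposite to the one you describe. One uses that $\omega=\nabla\phi\times\nabla\psi=\nabla\times(\phi\nabla\psi)$ is already a curl of a bounded field ($|\nabla\psi|\le C$ by hypothesis, and $|\phi|$ bounded by the maximum principle for the transport equation with the given initial data), and moves that curl onto the cutoff kernel; the kernel becomes \emph{more} singular, $O(|\mathbf{y}|^{-3})$, and its integral over $\{\delta<|\mathbf{y}|<2\rho\}$ is precisely $C\log(\rho/\delta)\sim C\log\Omega(t)$ after choosing $\delta=\min(1/\Omega(t),\rho/2)$. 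Your proposed move --- taking the derivative off the kernel and onto $\phi$ --- makes the kernel milder but puts a second derivative on the Clebsch variables, which nothing in the hypotheses controls, so the step as described would not close. (The cutoff-derivative boundary terms you flag are indeed present, but they live where the kernel is bounded and are estimated by $C\max|\phi\nabla\psi|$; this is how the paper handles them.) Your final observation, that the bounded-gradient hypothesis must be read as a standing structural assumption rather than derived from the transport equation, is correct and matches the paper's reading.
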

\begin{proof}
Without the loss of generality, we may assume that
$|\nabla \psi | \le C$.
By the well-known Biot-Savart Law \cite{MB02}, we have
  \begin{eqnarray}
    \bu(\bx,t)&=&\frac{1}{4\pi}\int_{\mathbb{R}^3} \frac{\mathbf{y}}{|\mathbf{y}|^3}\times \omega(\bx+\mathbf{y},t)d\mathbf{y}
,\quad \forall \bx\in L^t.
  \end{eqnarray}
Define a smooth cut-off function 
$\chi: \{0\}\cup \mathbb{R}^+ \rightarrow [0,1]$, such that 
$\chi(r)=1$ for $0 \le r\le 1$ and $\chi(r)=0$ for $r\ge 2$. 
Let $0<\delta<\rho/2$ be a small positive parameter to be determined later. 
Then we have
 \begin{eqnarray}
   |\bu(\bx,t)|&=&\frac{1}{4\pi}\left|\int_{\mathbb{R}^3} \frac{\mathbf{y}}{|\mathbf{y}|^3}\times \omega
(\bx+\mathbf{y},t)d\mathbf{y}\right|,\nonumber\\
&\le&\frac{1}{4\pi}\left|\int_{\mathbb{R}^3}\chi\left(\frac{|\mathbf{y}|}{\delta}\right) \frac{\mathbf{y}}{|\mathbf{y}|^3}\times \omega
(\bx+\mathbf{y},t)d\mathbf{y}\right|\nonumber\\
&&\quad\quad +\frac{1}{4\pi}\left| \int_{\mathbb{R}^3}\left(1-\chi\left(\frac{|\mathbf{y}|}{\delta}\right)\right) \frac{\mathbf{y}}{|\mathbf{y}|^3}\times \omega
(\bx+\mathbf{y},t)d\mathbf{y}\right|\nonumber\\
&\equiv & I_1+I_2.
 \end{eqnarray}
By a direct calculation, we get
\begin{eqnarray}
\label{I1}
  I_1\le C\delta\; \Omega .
\end{eqnarray}
To estimate $I_2$, we split it into two terms as follows: 
\begin{eqnarray}
  I_2&\le& \frac{1}{4\pi}\left| \int_{\mathbb{R}^3}\chi\left(\frac{|\mathbf{y}|}{\rho}\right)\left(1-\chi\left(\frac{|\mathbf{y}|}{\delta}\right)\right) \frac{\mathbf{y}}{|\mathbf{y}|^3}\times \omega
(\bx+\mathbf{y},t)d\mathbf{y}\right|\nonumber\\
&&+\frac{1}{4\pi}\left| \int_{\mathbb{R}^3}\left(1-\chi\left(\frac{|\mathbf{y}|}{\rho}\right)\right)\left(1-\chi\left(\frac{|\mathbf{y}|}{\delta}\right)\right) \frac{\mathbf{y}}{|\mathbf{y}|^3}\times \omega
(\bx+\mathbf{y},t)d\mathbf{y}\right|\nonumber\\
&=&\frac{1}{4\pi}\left| \int_{\mathbb{R}^3}\chi\left(\frac{|\mathbf{y}|}{\rho}\right)\left(1-\chi\left(\frac{|\mathbf{y}|}{\delta}\right)\right) \frac{\mathbf{y}}{|\mathbf{y}|^3}\times \omega
(\bx+\mathbf{y},t)d\mathbf{y}\right|\nonumber\\
&&+\frac{1}{4\pi}\left| \int_{\mathbb{R}^3}\left(1-\chi\left(\frac{|\mathbf{y}|}{\rho}\right)\right) \frac{\mathbf{y}}{|\mathbf{y}|^3}\times \omega
(\bx+\mathbf{y},t)d\mathbf{y}\right|\nonumber\\
&\equiv &I_3+I_4.
\end{eqnarray}

We first estimate $I_4$. Integration by parts gives
\begin{eqnarray}
  I_4&=&\frac{1}{4\pi}\left| \int_{\mathbb{R}^3}\left(1-\chi\left(\frac{|\mathbf{y}|}{\rho}\right)\right)
 \frac{\mathbf{y}}{|\mathbf{y}|^3}\times\left( \nabla\times \bu\right)(\bx+\mathbf{y},t)d\mathbf{y}\right|\nonumber\\
&= &\frac{1}{4\pi}\left| \int_{\mathbb{R}^3}\left(\bu(\bx+\mathbf{y},t)\times\nabla\right)\times\left[\left(1-\chi\left(\frac{|\mathbf{y}|}{\rho}\right)\right)
 \frac{\mathbf{y}}{|\mathbf{y}|^3}\right]d\mathbf{y}\right|\nonumber\\
&\le &\frac{1}{4\pi}\left| \int_{\mathbb{R}^3}\left[\bu(\bx+\mathbf{y},t)\times
\nabla\left(1-\chi\left(\frac{|\mathbf{y}|}{\rho}\right)\right)\right]
 \frac{\mathbf{y}}{|\mathbf{y}|^3}d\mathbf{y}\right|\nonumber\\
&&+\frac{1}{4\pi}\left| \int_{\mathbb{R}^3}\left(1-\chi\left(\frac{|\mathbf{y}|}{\rho}\right)\right)
\left[\left(\bu(\bx+\mathbf{y},t)\times\nabla\right)\times
 \frac{\mathbf{y}}{|\mathbf{y}|^3}\right]d\mathbf{y}\right|\nonumber\\
&\equiv& A+B.
\end{eqnarray}

By a direct calculation and using the  H\"older inequality, we can estimate
each term defined in the above expression as follows:
\begin{eqnarray}
\label{I4-1}
  A&\le& C\rho^{-3/2}\|\bu\|_2,\\
\label{I4-2}
B&\le & C\rho^{-3/2}\|\bu\|_2.
\end{eqnarray}

To estimate $I_3$, using the assumptions 
$\omega=\left(\nabla \phi \times \nabla \psi\right)= \nabla\times (\phi\nabla\psi),\; \forall \bx \in \Omega_0(t)$ and 
$ \bigcup_{\bx\in L^t} B(\bx,\rho)\subset \Omega_0(t)$, we can 
to split $I_3$ into three terms for any $\bx\in L^t$:
\begin{eqnarray}
I_3  &=&
\frac{1}{4\pi}\left| \int_{\mathbb{R}^3}\chi\left(\frac{|\mathbf{y}|}{\rho}\right)\left(1-\chi\left(\frac{|\mathbf{y}|}{\delta}\right)\right) \frac{\mathbf{y}}{|\mathbf{y}|^3}\times\omega(\bx+\mathbf{y},t)d\mathbf{y}\right|
\nonumber\\
 &=&
\frac{1}{4\pi}\left| \int_{\mathbb{R}^3}\chi\left(\frac{|\mathbf{y}|}{\rho}\right)\left(1-\chi\left(\frac{|\mathbf{y}|}{\delta}\right)\right) \frac{\mathbf{y}}{|\mathbf{y}|^3}\times\left(\nabla\times \left(\phi\nabla \psi\right)\right)(\bx+\mathbf{y},t)d\mathbf{y}\right|
\nonumber\\
 &=& \frac{1}{4\pi}\left| \int_{\mathbb{R}^3}\left(\left(\phi\nabla \psi\right)(\bx+\mathbf{y},t)\times\nabla\right)\times \left[\chi\left(\frac{|\mathbf{y}|}{\rho}\right)\left(1-\chi\left(\frac{|\mathbf{y}|}{\delta}\right)\right) \frac{\mathbf{y}}{|\mathbf{y}|^3}\right] d\mathbf{y}\right|\nonumber\\
&\le& \frac{1}{4\pi}\left| \int_{\mathbb{R}^3}\left(1-\chi\left(\frac{|\mathbf{y}|}{\delta}\right)\right)
\left[\left(\left(\phi\nabla \psi\right)(\bx+\mathbf{y},t)\times
\nabla\chi\left(\frac{|\mathbf{y}|}{\rho}\right)\right)\times
 \frac{\mathbf{y}}{|\mathbf{y}|^3}\right] d\mathbf{y}\right|\nonumber\\
&&+ \frac{1}{4\pi}\left| \int_{\mathbb{R}^3}\chi\left(\frac{|\mathbf{y}|}{\rho}\right)
\left[\left(\left(\phi\nabla \psi\right)(\bx+\mathbf{y},t)\times
\nabla\left(1-\chi\left(\frac{|\mathbf{y}|}{\delta}\right)\right)\right)\times
 \frac{\mathbf{y}}{|\mathbf{y}|^3}\right] d\mathbf{y}\right|\nonumber\\
&&+ \frac{1}{4\pi}\left| \int_{\mathbb{R}^3}\chi\left(\frac{|\mathbf{y}|}{\rho}\right)
\left(1-\chi\left(\frac{|\mathbf{y}|}{\delta}\right)\right)
\left[\left(\left(\phi\nabla \psi\right)(\bx+\mathbf{y},t)\times
\nabla\right)\times
 \frac{\mathbf{y}}{|\mathbf{y}|^3}\right] d\mathbf{y}\right|\nonumber\\
&\equiv & C+D+E,\hspace{1cm} \forall \bx\in L^t.
\end{eqnarray}
By a direct calculation, we get
\begin{eqnarray}
\label{I5-1}
  C &\le& C\max_{\bx\in \Omega_0(t)}\left|\phi\nabla \psi\right|, \\
\label{I5-2}
D&\le& C\max_{\bx\in \Omega_0(t)}\left|\phi\nabla \psi\right|, \\
\label{I5-3}
E&\le& C\log\left(\frac{\rho}{\delta}\right)\max_{\bx\in \Omega_0(t)}\left|\phi\nabla \psi\right|.
\end{eqnarray}
By taking $\D\delta=\min\left(\frac{1}{\Omega(t)},\frac{\rho}{2}\right)$ and using the 
assumption $|\nabla\psi|\le C$ and the fact that
$|\phi|$, $\|\bu\|_2$ are bounded, we prove that
\begin{eqnarray}
   \max_{\bx\in L^t}|\mathbf{u}(\bx,t)|\le C_u\log \Omega(t). 
\end{eqnarray}
for some constant $C_u>0$ as long as $\Omega(t)>e$.

\end{proof}

\vspace{0.1in}
\noindent
{\bf Acknowledgments}
The research was in part supported by the National Science Foundation
through the grant DMS-0908546.

\vspace{0.2in}

\end{document}